\newtheorem{remark}{Remark}[section]
\newtheorem{algo}{Algorithm}[section]
\title{Projected Reflected Gradient Methods for Monotone Variational Inequalities} 
\author{Yu. Malitsky\footnotemark[2]}
\newcommand{\n}[1]{\left\|#1 \right\|} 
\renewcommand{\a}{\alpha}
\renewcommand{\b}{\beta}
\renewcommand{\c}{\gamma}
\renewcommand{\d}{\delta}
\newcommand{\la}{\lambda}
\newcommand{\e}{\varepsilon}
\renewcommand{\t}{\tau}
\newcommand{\x}{\bar x}
\newcommand{\R}{\mathbb R}
\renewcommand {\r}{\rightarrow}
\newcommand{\rr}{\rightharpoonup} 
\newcommand{\lr}[1]{\left\langle #1\right\rangle} 
\begin{document}
\maketitle
\slugger{mms}{xxxx}{xx}{x}{x--x}
\renewcommand{\thefootnote}{\fnsymbol{footnote}}
\footnotetext[2]{Department of Cybernetics, Taras Shevchenko National
  University of Kyiv,\\  64/13,  Volodymyrska Str.,  Kyiv, 01601,
  Ukraine (\email{y.malitsky@gmail.com).}}
 
\begin{abstract}
This paper is concerned with some new projection methods for solving variational inequality problems
with monotone and Lipschitz-continuous mapping in Hilbert space. First, we propose the projected
reflected gradient algorithm with a constant stepsize. It is similar to  the projected gradient
method, namely, the method requires  only one projection onto the  feasible set and only one value
of the mapping per iteration. This distinguishes our method from  most other projection-type methods
for variational inequalities  with monotone mapping. Also we prove that it has R-linear rate of
convergence under the strong monotonicity assumption.   The usual drawback of algorithms with
constant stepsize is the requirement to know the Lipschitz constant of the mapping. To avoid this, we
modify our first algorithm so that the algorithm needs at most  two projections per  iteration. In fact,  our computational experience shows that such cases with two projections are very rare. This scheme, at least theoretically, seems to be very effective.  All methods are shown to be globally convergent to a solution of the variational inequality. Preliminary results from numerical experiments are quite promising.
\end{abstract}

\begin{keywords}
variational inequality, projection method, monotone mapping, extragradient method
\end{keywords}

\begin{AMS}
 47J20, 90C25, 90C30, 90C52
\end{AMS}

\pagestyle{myheadings}
\thispagestyle{plain}
\markboth{YU. MALITSKY}{PROJECTED REFLECTED GRADIENT METHODS}

\section{Introduction}\label{intro}
We consider the classical variational inequality problem (VIP) which is to find a point $x^*\in C$ such that
\begin{equation}\label{vip}
\lr{F(x^*), x-x^*}\geq 0 \quad \forall x\in C,
\end{equation}
where $C$ is a closed convex set in Hilbert space $H$, $\lr{\cdot, \cdot}$ denotes the inner product in $H$, and $F\colon H\r H$ is  some mapping.
We assume that the following conditions hold
\begin{flushleft}
\begin{itemize}
\item[(C1)]\quad The solution set of \eqref{vip}, denoted by $S$, is nonempty.
\item[(C2)]\quad The mapping $F$ is monotone, i.e.,
$$\lr{F(x)-F(y),x-y}\geq 0\quad \forall x,y \in H.$$
\item[(C3)]\quad  The mapping $F$ is Lipschitz-continuous with constant $L>0$, i.e., there exists $L>0$ such that
$$\n{F(x)-F(y)}\leq L \n{x-y} \quad \forall x,y\in H.$$
\end{itemize}
\end{flushleft}

The variational inequality problem is one of the central problems in nonlinear analysis (see \cite{baiocchi:84,pang:03,zeidler3}).  Also monotone operators have turned out to be an important tool in the study of various problems arising in the domain of optimization, nonlinear analysis, differential equations and other related fields (see \cite{baucomb,zeidler2}). Therefore, numerical methods for VIP with monotone operator have been extensively studied in the literature, see \cite{pang:03,konnov07} and references therein. In this section we briefly consider the development of projection methods for monotone variational inequality that provide weak convergence to a solution of \eqref{vip}. 
 
The simplest iterative procedure is the well-known projected gradient method 
\begin{equation*}
  x_{n+1}=P_C (x_n- \la F(x_n)),
\end{equation*}
where $P_C$ denotes the metric projection onto the set $C$ and $\la$ is some positive number.  In order to converge, however, this method requires the restrictive assumption that $F$ be strongly (or inverse strongly) monotone. The extragradient
method proposed by Korpelevich and Antipin \cite{korpel:76,antipin}, according to the following formula, overcomes this difficulty
\begin{equation}\label{korpel}
\begin{cases}
     y_{n}=P_C(x_n-\la F(x_n))\\
     x_{n+1}=P_C(x_n-\la F(y_n)),
   \end{cases}
 \end{equation}
where $\la \in (0,\frac 1 L)$. The extragradient method has received a great deal of attention by many authors, who
improved it in various ways; see, e.g., \cite{khobotov,iusem:97,solodov:1999,reich:2011} and references
therein. We restrict our attention only to one extension of the extragradient
method. It was proposed in \cite{reich:2011} by Censor, Gibali and
Reich  
\begin{equation}\label{censor}
\begin{cases}  
y_{n} = P_C(x_n-\la F(x_n))\\
T_n=\{w \in H \mid \lr{x_n-\la F(x_n)-y_n,w-y_n}\leq 0\}\\
x_{n+1} = P_{T_n}(x_n-\la F(y_n)),
\end{cases}
\end{equation}
where $\la \in (0,\frac 1 L)$.
Since the second projection in \eqref{censor} can be found in a closed form, this method is more applicable when a projection
onto the closed convex set $C$ is a nontrivial problem. 

As an alternative to the extragradient method or its modifications  is  the following remarkable scheme proposed by
Tseng in \cite{tseng00}
\begin{equation}  \label{tseng}
\begin{cases}
     y_n = P_C(x_n-\la F(x_n))\\
     x_{n+1} = y_n +\la (F(x_n)-F(y_n)),
\end{cases}
\end{equation}
where $\la\in (0,\frac 1 L)$. Both algorithms  \eqref{censor} and \eqref{tseng} have the same complexity per iteration:  we need to compute one projection onto the set $C$ and two values of $F$.

Popov in his work \cite{popov:80} proposed an ingenious method, which is similar to the extragradient method, but uses on every iteration only one value of the mapping $F$.  Using the idea from \cite{reich:2011,lyashko11}, Malitsky and Semenov improved Popov algorithm. They presented in \cite{mal-sem} the following algorithm
\begin{equation}\label{mal_sem}
\begin{cases}
 T_n=\{w\in H\mid \lr{x_n-\la F(y_{n-1})-y_n,w-y_n}\leq 0\}\\
x_{n+1}=P_{T_n}(x_n-\la F(y_n))\\
y_{n+1}=P_C(x_{n+1}-\la F(y_n)),
\end{cases}
\end{equation} 
where $\la \in (0,\frac{1}{3L}]$. It is easy to see that this method needs only one projection onto the set $C$ (as in \eqref{censor} or \eqref{tseng}) and only one value of $F$ per iteration. The latter makes algorithm \eqref{mal_sem} very attractive for cases when a computation of operator $F$ is expensive. This  often happens, for example, in a huge-scale VIP or VIP that arises from optimal control.

In this work we propose the following scheme
\begin{equation}\label{our}
x_{n+1} = P_{C}(x_n-\la F(2x_n-x_{n-1})),
\end{equation}
where  $\la\in (0,\frac{\sqrt 2 -1}{L})$. Again we see that both algorithms~\eqref{mal_sem}
and~\eqref{our} have the same  computational complexity per iteration, but the latter has more
simple and elegant structure. Algorithm \eqref{our} reminds the projected gradient method, however,
a value of a gradient is taken in the point that is a reflection of $x_{n-1}$ in $x_n$. Preliminary 
results from numerical experiments, comparing algorithm \eqref{our} to others, are promising. Also
note that a simple structure of \eqref{our} allows us to obtain a nonstationary variant of
~\eqref{our} with variable stepsize in a very convenient way.

The paper is organized as follows. Section~\ref{prel}  presents some basic useful facts which we use throughout the paper. In Section~\ref{algorithm} we prove the convergence of the method~\eqref{our}. Under some more restrictive assumption we also establish its rate of convergence.  In Section~\ref{modif_algorithm} we present the nonstationary Algorithm~\ref{algModif1} which is more flexible, since it does not use the Lipschitz constant of the mapping $F$. This makes it much more convenient in practical applications than Algorithm~\ref{our}.   Section~\ref{num_results} contains the results of numerical experiments.

\section{Preliminaries}\label{prel}
The next three statements are classical. For the proof we refer the reader to \cite{baiocchi:84,baucomb}.

\begin{lemma}\label{proj}
  Let $M$ be nonempty closed convex set in $H$, $x \in H$. Then
  \begin{romannum}
  \item $\lr{P_Mx - x, y-P_Mx}\geq 0\quad \forall y\in M$;
  \item $\n{P_Mx-y}^2\leq \n{x-y}^2 - \n{x-P_Mx}^2 \quad \forall y\in M$.
  \end{romannum}
\end{lemma}
\ 

\begin{lemma}[Minty]\label{minty}
  Assume that $F:C\r H$ is a continuous and monotone mapping. Then $x^*$ is a solution of \eqref{vip} iff $x^*$ is a solution of the following problem
$$\text{find}\quad x\in C \quad \text{such that}\quad \lr{F(y),y-x}\geq 0\quad \forall y\in C.$$
\end{lemma}
\begin{remark}
  The solution set $S$ of variational inequality \eqref{vip} is closed and convex.
\end{remark}

As usual, the symbol  $\rr$ denotes  a weak convergence in $H$.
\begin{lemma}[Opial] \label{opial} Let $(x_n)$ be a sequence in $H$ such that $x_n\rr x$. Then for all $y\neq x$
$$\liminf_{n\to \infty}\n{x_n-x} <\liminf_{n\to \infty }\n{x_n-y}.$$
\end{lemma}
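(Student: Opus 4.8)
The plan is to reduce everything to squared norms, where the Hilbert space structure lets me expand via the inner product. First I would write, for an arbitrary fixed $y$, the elementary identity
$$\n{x_n-y}^2 = \n{x_n-x}^2 + 2\lr{x_n-x,x-y} + \n{x-y}^2,$$
which is nothing but the expansion of the inner product of $x_n-y = (x_n-x)+(x-y)$ with itself. The whole point of casting the problem this way is that the cross term $\lr{x_n-x,x-y}$ is \emph{linear} in $x_n$, so weak convergence controls it directly.

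Next I would use that $x_n\rr x$ means precisely $\lr{x_n-x,z}\r 0$ for every $z\in H$; taking $z=x-y$ gives $\lr{x_n-x,x-y}\r 0$. Hence the last two terms on the right-hand side of the identity combine into a sequence that converges to $\n{x-y}^2$. Since adding a convergent sequence does not disturb the $\liminf$ of the remaining part, I can pass to the limit inferior and obtain
$$\liminf_{n\to\infty}\n{x_n-y}^2 = \liminf_{n\to\infty}\n{x_n-x}^2 + \n{x-y}^2.$$

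Because $y\neq x$ we have $\n{x-y}^2>0$, so the inequality between the squared quantities is strict. The final step is to remove the squares: since $t\mapsto t^2$ is continuous and strictly increasing on $[0,\infty)$ and every quantity in sight is nonnegative, the limit inferior commutes with the square root and the strict inequality is preserved, which yields the stated conclusion.

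I expect the only delicate point to be the bookkeeping with $\liminf$. One must invoke the fact that $\liminf_{n}(a_n+c_n)=\liminf_{n} a_n + c$ whenever $c_n\r c$, rather than the mere superadditivity of $\liminf$ that holds in general, and one must check that strictness survives the passage from squared norms back to norms. Neither is hard, but both rest on the cross term genuinely tending to zero, which is exactly the content of weak convergence; in particular no boundedness argument or appeal to the uniform boundedness principle is required.
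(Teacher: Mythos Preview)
The paper does not prove this lemma; it is cited as classical with a reference to the literature. Your argument is the standard Hilbert-space proof of Opial's property and is correct in substance.

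One remark deserves correction, however: you close by asserting that no boundedness argument or appeal to the uniform boundedness principle is required. In fact, the passage from
\[
\liminf_{n\to\infty}\n{x_n-y}^2 \;=\; \liminf_{n\to\infty}\n{x_n-x}^2 + \n{x-y}^2
\]
to a \emph{strict} inequality between the two $\liminf$'s needs $\liminf_{n}\n{x_n-x}^2<\infty$; if it were $+\infty$, both sides would equal $+\infty$ and no strict comparison would be possible. That finiteness holds precisely because weakly convergent sequences are bounded, and in a general Hilbert (or Banach) space that fact rests on the uniform boundedness principle. So the principle is invoked after all, if only implicitly, to make the arithmetic with $\liminf$ legitimate. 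The remaining steps --- the polarization identity, the vanishing of the cross term by weak convergence, the additivity of $\liminf$ under addition of a convergent sequence, and the passage back to norms via the monotonicity and continuity of the square root --- are all in order.
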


The proofs of the next four statements are omitted by their simplicity.

\begin{lemma}\label{sunny}
  Let $M$ be nonempty closed convex set in $H$, $x \in H$ and $\x = P_M x$. Then for all $\la>0 $ $$P_M(\x + \la (x-\x)) = \x.$$
\end{lemma}

\begin{lemma}\label{u_v}
  Let $u$, $v$ be in H. Then
$$\n{2u-v}^2 = 2\n{u}^2 - \n{v}^2 + 2\n{u-v}^2.$$  
\end{lemma}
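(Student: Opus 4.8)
The plan is to prove the identity by direct expansion, treating it as a purely algebraic consequence of the bilinearity and symmetry of the inner product on $H$. First I would rewrite each squared norm as an inner product via $\n{w}^2 = \lr{w,w}$. Expanding the left-hand side gives
$$\n{2u-v}^2 = \lr{2u-v,\,2u-v} = 4\n{u}^2 - 4\lr{u,v} + \n{v}^2,$$
where I have used the symmetry $\lr{u,v}=\lr{v,u}$ to collect the two cross terms.

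Next I would expand the mixed term on the right, namely $\n{u-v}^2 = \n{u}^2 - 2\lr{u,v} + \n{v}^2$, so that
$$2\n{u}^2 - \n{v}^2 + 2\n{u-v}^2 = 4\n{u}^2 - 4\lr{u,v} + \n{v}^2.$$
Comparing the two displays shows that both sides reduce to the same expression, which completes the argument.

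There is no genuine obstacle here, and the only property invoked is that $H$ carries a real inner product. For a slightly more conceptual reading, I would observe that the claim is just the parallelogram law $\n{a+b}^2 + \n{a-b}^2 = 2\n{a}^2 + 2\n{b}^2$ applied with $a=u$ and $b=u-v$: then $a+b = 2u-v$ and $a-b = v$, and rearranging recovers the stated identity. This also explains why the authors list it among the statements whose proofs are omitted for simplicity.
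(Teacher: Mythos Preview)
Your proof is correct: the direct inner-product expansion (and the parallelogram-law reformulation) is exactly the routine verification the authors have in mind when they write that the proof is ``omitted by [its] simplicity.'' There is nothing to add or correct.
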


\begin{lemma}\label{liminf}
  Let $(a_n)$ and $(b_n)$ be two real sequences. Then
$$\liminf_{n \to \infty} a_n + \liminf_{n \to \infty} b_n \leq \liminf_{n\to\infty} (a_n+b_n).$$
\end{lemma}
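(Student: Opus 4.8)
The plan is to use the representation of the limit inferior as a limit of truncated infima. Writing $\alpha_n := \inf_{k\geq n} a_k$ and $\beta_n := \inf_{k\geq n} b_k$, both sequences are nondecreasing in $n$ and satisfy $\alpha_n \to \liminf_{n\to\infty} a_n$ and $\beta_n \to \liminf_{n\to\infty} b_n$ (in the extended reals). The entire lemma then reduces to one elementary inequality between these truncated infima, after which the conclusion follows by passing to the limit.

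First I would fix $n$ and note that for every index $k\geq n$ the inequalities $\alpha_n \leq a_k$ and $\beta_n \leq b_k$ hold by definition of the infimum, whence $\alpha_n + \beta_n \leq a_k + b_k$. Since this is valid for all $k\geq n$, the quantity $\alpha_n + \beta_n$ is a lower bound of the set $\{a_k + b_k : k\geq n\}$, so taking the infimum over $k\geq n$ on the right gives
$$\alpha_n + \beta_n \leq \inf_{k\geq n}(a_k + b_k).$$
Next I would let $n\to\infty$ in this inequality. The right-hand side converges to $\liminf_{n\to\infty}(a_n+b_n)$ by the very definition of the limit inferior, while on the left the monotone sequences $\alpha_n$ and $\beta_n$ converge to $\liminf_{n\to\infty} a_n$ and $\liminf_{n\to\infty} b_n$ respectively; adding these limits yields exactly the asserted bound.

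The single point that requires care, and the only real obstacle, is the extended-real-line arithmetic in the last step: passing from $\lim_n(\alpha_n+\beta_n)$ to $\liminf_{n\to\infty}a_n + \liminf_{n\to\infty}b_n$ is unambiguous as long as the sum of the two limit inferiors is well defined, i.e.\ not of the indeterminate form $(+\infty)+(-\infty)$. For the intended application both $(a_n)$ and $(b_n)$ are bounded real sequences, so all the quantities involved are finite and the sum of limits is simply the limit of the sum; I would therefore either restrict to this case or invoke the standard convention ruling out the indeterminate form, and the proof is complete.
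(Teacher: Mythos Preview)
Your proof is correct and entirely standard; the paper itself omits the proof of this lemma (together with the three surrounding lemmas), citing their simplicity, so there is no approach in the paper to compare against. Your care with the extended-real indeterminate form $(+\infty)+(-\infty)$ is well placed: as you note, in the single application of this lemma in the proof of Theorem~\ref{th_1} both sequences are bounded below (one is a squared norm, the other a nonnegative constant times an inner product $\lr{F(\tilde x),x_{n_k}-\tilde x}$ with $(x_{n_k})$ bounded), so the issue does not arise.
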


\begin{lemma}\label{lim_seq}
  Let $(a_n)$, $(b_n)$ be two nonnegative real sequences such that
$$a_{n+1}\leq a_n - b_n.$$
Then $(a_n)$ is bounded and $\lim_{n\r\infty}b_n = 0$.
\end{lemma}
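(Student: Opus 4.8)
The plan is to exploit the fact that nonnegativity of $(b_n)$ turns the single hypothesis $a_{n+1}\leq a_n - b_n$ into a monotonicity statement for $(a_n)$, and then to read off the conclusion about $(b_n)$ from a squeeze argument. First I would observe that since $b_n\geq 0$, the hypothesis gives at once $a_{n+1}\leq a_n$, so $(a_n)$ is nonincreasing. Being also bounded below by $0$, the sequence $(a_n)$ is bounded. Moreover, a monotone bounded real sequence converges, so $a_n\to a$ for some $a\geq 0$.

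Next I would rearrange the hypothesis into $0\leq b_n\leq a_n - a_{n+1}$. Since $(a_n)$ converges, the right-hand side tends to $a-a=0$, and the squeeze theorem forces $b_n\to 0$, which is the second assertion. This completes the argument once boundedness and convergence of $(a_n)$ have been established.

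There is no genuine obstacle here, in keeping with the remark that these statements are omitted for their simplicity: the only point to notice is that the one inequality simultaneously encodes both the monotonicity (hence boundedness and convergence) of $(a_n)$ and, via the resulting telescoping bound, the vanishing of $(b_n)$. An equivalent route would be to sum the inequalities to obtain $\sum_n b_n\leq a_0 - a < \infty$, whose convergence again yields $b_n\to 0$; I would prefer the squeeze argument as the most economical.
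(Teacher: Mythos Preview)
Your argument is correct and is precisely the elementary proof the paper has in mind; the paper in fact omits the proof entirely, citing its simplicity. There is nothing to compare against and nothing to add.
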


In order to establish the rate of convergence, we need the following

\begin{lemma}\label{conv_seq}
  Let $(a_n)$, $(b_n)$ be two nonnegative real sequences and $\a_0,\b\in (0,1)$ such that for all $\a\in(0,\a_0)$ the following holds
  \begin{equation}\label{rate_1}
    a_{n+1} + b_{n+1}\leq     (1-2\a)a_n+\a a_{n-1} + \b b_n.
  \end{equation}
  Then there exist $\c \in (0,1)$ and $M>0$ such that $a_{n}\leq \c^n M$ for any $n>0$.
\end{lemma}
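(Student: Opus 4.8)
The plan is to introduce a Lyapunov-type functional $\Phi_n := a_n + c\,a_{n-1} + d\,b_n$, where $c\geq 0$ and $d\in[0,1]$ are constants to be fixed, and to show that for a suitable $\c\in(0,1)$ one has $\Phi_{n+1}\leq \c\,\Phi_n$. Since all the sequences are nonnegative and $c,d\geq 0$, we have $a_n\leq \Phi_n$, so such a contraction immediately yields $a_n\leq \Phi_n\leq \c^{\,n-1}\Phi_1$, which is the desired estimate with $M=\Phi_1/\c$. The only real content is therefore to produce the constants $c$, $d$ and $\c<1$.

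First I would fix any $\a\in(0,\a_0)$ and estimate $\Phi_{n+1}$. Because $0\leq d\leq 1$ and $b_{n+1}\geq 0$, we have $d\,b_{n+1}\leq b_{n+1}$, and hence, using \eqref{rate_1},
\begin{equation*}
\Phi_{n+1}=a_{n+1}+d\,b_{n+1}+c\,a_n\leq (a_{n+1}+b_{n+1})+c\,a_n\leq (1-2\a+c)a_n+\a a_{n-1}+\b b_n.
\end{equation*}
Comparing this with $\c\,\Phi_n=\c a_n+\c c\,a_{n-1}+\c d\,b_n$, it suffices to choose the constants so that
\begin{equation*}
1-2\a+c\leq \c,\qquad \a\leq \c c,\qquad \b\leq \c d.
\end{equation*}

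The remaining, and main, task is to check that these three conditions admit a common solution with $\c\in(0,1)$, $c\geq 0$ and $d\in[0,1]$. The third condition is met by taking $d=\b/\c$, which lies in $(0,1]$ as soon as $\c\geq \b$. The first two require $\a/\c\leq c\leq \c-1+2\a$, and such a $c$ exists precisely when $\a/\c\leq \c-1+2\a$, i.e. when $\c^2-(1-2\a)\c-\a\geq 0$. I would then compute the positive root of this quadratic, $\c_+=\tfrac12\big((1-2\a)+\sqrt{1+4\a^2}\big)$, and verify the elementary inequality $\sqrt{1+4\a^2}<1+2\a$, which forces $\c_+<1$ for every $\a>0$. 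Choosing $\c:=\max\{\b,\c_+\}\in(0,1)$ then makes all three inequalities simultaneously satisfiable, so $\Phi_{n+1}\leq \c\,\Phi_n$ holds for all $n$, and iterating completes the proof.

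I expect the only genuine obstacle to be the bookkeeping in the last paragraph: guessing a functional $\Phi_n$ whose contraction factor can be pushed strictly below $1$, and confirming that the coupled coefficient inequalities are consistent. Everything collapses to the single scalar condition $\c^2-(1-2\a)\c-\a\geq 0$ having a root below $1$, which is exactly where the hypothesis is used (any fixed $\a\in(0,\a_0)$ suffices, and one should note that $\c_+\to 1$ as $\a\to 0$, so the rate degrades for small stepsizes). The role of $b_n$ is controlled essentially for free, through the constraint $d\leq 1$ together with $b_{n+1}\geq 0$.
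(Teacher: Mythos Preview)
Your proposal is correct and follows essentially the same approach as the paper: both build a Lyapunov quantity $\Phi_n=a_n+\delta a_{n-1}+(\text{coeff})\,b_n$, reduce the contraction condition to the quadratic $\gamma^2-(1-2\alpha)\gamma-\alpha\geq 0$, and identify the same root $\gamma_+=\tfrac12\bigl((1-2\alpha)+\sqrt{1+4\alpha^2}\bigr)<1$. The only cosmetic difference is in how the constraint $\gamma\geq\beta$ is enforced: the paper keeps the $b_n$-coefficient equal to $1$ and exploits the freedom in $\alpha$ (choosing it small enough that $\gamma_+(\alpha)\geq\beta$, using that $\gamma_+$ is continuous with $\gamma_+(0)=1$), whereas you fix an arbitrary $\alpha$ and instead set $\gamma=\max\{\beta,\gamma_+\}$ with $d=\beta/\gamma\leq 1$; your variant therefore uses the ``for all $\alpha$'' hypothesis slightly less.
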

\begin{proof}
  First, note that a simple calculus ensures that $f(x) =\frac{1-2x + \sqrt{1+4x^2}}{2} $ is
  decreasing on $(0,1)$. Since $f$ is continuous and $f(0) = 1$, we can choose $\a\in (0,\a_0)$ such
  that $f(\a)\geq \b$. Our next task is to find some $\c\in (0,1)$ and $\d>0$ such that
  \eqref{rate_1} with $\a$ defined as above will be equivalent to the inequality
$$  a_{n+1} + \d a_n + b_{n+1}\leq     \c (a_n+\d a_{n-1}) + \b b_n.  $$
It is easy to check that such numbers are
\begin{align*}
  \c & =  \dfrac{1-2\a + \sqrt{1+4\a^2}}{2}\in (0,1),\\
  \d & = \dfrac{-1+2\a + \sqrt{1+4\a^2}}{2}>0.
\end{align*}
Then by $\c = f(\a)\geq \b$ we can conclude that
\begin{equation*}
  a_{n+1} + \d a_n + b_{n+1}\leq     \c (a_n+\d a_{n-1} + b_n). 
\end{equation*}
Iterating the last inequality simply leads to the desired result
\begin{equation*}
  a_{n+1} + \d a_n + b_{n+1}\leq     \c (a_n+\d a_{n-1} + b_n) \leq \dots \leq \c^n (a_1 + \d a_{0} + b_1) = \c^n M,
\end{equation*}
where $M = a_1 + \d a_0 + b_1 >0 $.
\end{proof}

\section{Algorithm and its convergence}\label{algorithm}
 We first note that solutions of \eqref{vip}
coincide with zeros of the following projected residual function:
$$r(x,y) := \n{y-P_C(x-\la F(y))} + \n{x-y},$$ where $\la$ is some positive number. Now we formally state our algorithm.
\begin{algo} \label{alg}
\begin{enumerate}
  \item Choose $x_0=y_0 \in H$ and
    $\la\in (0,\frac{\sqrt 2 -1}{L})$.
  \item \label{step} Given the current iterate $x_n$ and $y_n$, compute
\begin{equation*}
x_{n+1} = P_C(x_n- \la F(y_n))
\end{equation*}
\item If $ r(x_n,y_n)=0$ then stop: $x_n=y_n=x_{n+1}$ is a   solution. Otherwise   compute 
$$y_{n+1}=2x_{n+1}-x_n,$$
  set $n:=n+1$ and return to step \ref{step}.
\end{enumerate}
\end{algo}
Next lemma is central to our proof of the convergence theorem. 
\begin{lemma}\label{main_lemma}  Let $(x_n)$ and $(y_n)$ be two sequences generated by Algorithm \ref{alg} and let $z\in S$. Then
\begin{align}\label{ineq_lemma}  
  \n{x_{n+1}-z}^2 \leq {}& \n{x_n-z}^2 - (1-\la L(1+\sqrt 2))   \n{x_n-x_{n-1}}^2  +  \la L \n{x_n-y_{n-1}}^2
    \nonumber \\ &- (1-\sqrt 2 \la L) \n{x_{n+1}-y_n}^2  - 2\la \lr{F(z),y_n-z}.
\end{align}
\end{lemma}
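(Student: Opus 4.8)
The plan is to start from the defining projection $x_{n+1}=P_C(x_n-\la F(y_n))$ and apply Lemma~\ref{proj}(ii) with the point $x_n-\la F(y_n)$ and $y=z$ (legitimate since $z\in S\subseteq C$). Expanding both squared norms and cancelling the common term $\la^2\n{F(y_n)}^2$ gives
\begin{equation*}
\n{x_{n+1}-z}^2\leq \n{x_n-z}^2-\n{x_{n+1}-x_n}^2-2\la\lr{F(y_n),x_{n+1}-z}.
\end{equation*}
Writing $x_{n+1}-z=(x_{n+1}-y_n)+(y_n-z)$ and invoking monotonicity (C2) in the form $\lr{F(y_n),y_n-z}\geq\lr{F(z),y_n-z}$, the last inner product supplies the desired term $-2\la\lr{F(z),y_n-z}$. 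Everything then reduces to bounding $-\n{x_{n+1}-x_n}^2-2\la\lr{F(y_n),x_{n+1}-y_n}$ from above by the three squared-norm terms in the statement.

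Two devices drive the estimate. First, the reflection step gives $y_n-x_n=x_n-x_{n-1}$, hence $x_{n+1}-x_n=(x_{n+1}-y_n)+(x_n-x_{n-1})$; expanding $\n{x_{n+1}-x_n}^2$ accordingly produces $-\n{x_{n+1}-y_n}^2-\n{x_n-x_{n-1}}^2$ together with a cross term $-2\lr{x_{n+1}-y_n,x_n-x_{n-1}}$. Second, I would bring in the previous value $F(y_{n-1})$ through
\begin{equation*}
-2\la\lr{F(y_n),x_{n+1}-y_n}=2\la\lr{F(y_n)-F(y_{n-1}),y_n-x_{n+1}}+2\la\lr{F(y_{n-1}),y_n-x_{n+1}},
\end{equation*}
which separates a Lipschitz part from a part controllable by the projection defining $x_n$.

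For the Lipschitz part, Cauchy--Schwarz together with (C3) yields the bound $2\la L\n{y_n-y_{n-1}}\,\n{x_{n+1}-y_n}$, and the identity $y_n-y_{n-1}=(x_n-y_{n-1})+(x_n-x_{n-1})$ lets me replace $\n{y_n-y_{n-1}}$ by $\n{x_n-y_{n-1}}+\n{x_n-x_{n-1}}$. Two applications of Young's inequality, one with weight $1$ and one with weight $s=1+\sqrt{2}$, then give exactly $\la L\n{x_n-y_{n-1}}^2+\sqrt{2}\,\la L\n{x_{n+1}-y_n}^2+(1+\sqrt{2})\la L\n{x_n-x_{n-1}}^2$; this is where $\sqrt{2}$ enters, since $1+\tfrac1s=\sqrt{2}$ for this choice. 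For the second part I would expand $y_n-x_{n+1}=(x_n-x_{n-1})+(x_n-x_{n+1})$ and apply Lemma~\ref{proj}(i) for $x_n=P_C(x_{n-1}-\la F(y_{n-1}))$ twice, with test points $w=x_{n-1}$ and $w=x_{n+1}$, obtaining
\begin{equation*}
2\la\lr{F(y_{n-1}),y_n-x_{n+1}}\leq -2\n{x_n-x_{n-1}}^2+2\lr{x_n-x_{n-1},x_{n+1}-x_n}.
\end{equation*}

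Finally I would collect terms. The cross term $2\lr{x_n-x_{n-1},x_{n+1}-x_n}$ from the projection part and the cross term $-2\lr{x_{n+1}-y_n,x_n-x_{n-1}}$ from the expansion of $\n{x_{n+1}-x_n}^2$ combine into $+2\n{x_n-x_{n-1}}^2$, which cancels the $-2\n{x_n-x_{n-1}}^2$ produced by the projection estimate; only pure squared norms then survive. Assembling their coefficients reproduces $-(1-\sqrt{2}\la L)\n{x_{n+1}-y_n}^2-(1-\la L(1+\sqrt{2}))\n{x_n-x_{n-1}}^2+\la L\n{x_n-y_{n-1}}^2$, as claimed. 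I expect the main obstacle to be precisely this bookkeeping of the several cross terms and the verification that the weight $s=1+\sqrt{2}$ makes the coefficients come out as stated; it is reassuring that the resulting positivity requirement $1-\la L(1+\sqrt{2})>0$ is equivalent to $\la<\frac{\sqrt{2}-1}{L}$, exactly the admissible stepsize range. A secondary point to watch is the smallest index, where $x_0=y_0$ need not lie in $C$, so the application of Lemma~\ref{proj}(i) with $w=x_{n-1}$ must be checked separately there.
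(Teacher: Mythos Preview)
Your proposal is correct and follows essentially the same approach as the paper: the same projection inequality for $x_{n+1}$, the monotonicity step, the splitting $F(y_n)=(F(y_n)-F(y_{n-1}))+F(y_{n-1})$, the two applications of Lemma~\ref{proj}(i) for $x_n$ with test points $x_{n-1}$ and $x_{n+1}$, and a Young-type Lipschitz estimate all appear in both arguments. The only cosmetic differences are that the paper uses the polarization identity $2\lr{y_n-x_n,x_{n+1}-y_n}=\n{x_{n+1}-x_n}^2-\n{x_n-y_n}^2-\n{x_{n+1}-y_n}^2$ directly rather than your expand-and-cancel treatment of the cross terms, and organizes the Young step slightly differently; your caveat about needing $x_0\in C$ at $n=1$ is well spotted and applies equally to the paper's own argument.
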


\begin{proof}
By Lemma \ref{proj}  we have 
\begin{eqnarray}\label{fst}
  \n{x_{n+1}-z}^2 && \leq \n{x_n-\la F(y_n) -z}^2 - \n{x_n-\la   F(y_n)-x_{n+1}}^2 \nonumber  \\ 
&& = \n{x_n-z}^2 - \n{x_{n+1}-x_{n}}^2 -  2\la \lr{F(y_n),x_{n+1}-z}.
  \end{eqnarray}
Since $F$ is monotone, $2\la \lr{F(y_n)-F(z), y_n- z} \geq 0$. Thus, adding this item to the right side of~\eqref{fst}, we get  
\begin{align}\label{snd} 
  \n{x_{n+1}-z}^2   \leq {} & \n{x_n-z}^2 - \n{x_{n+1}-x_{n}}^2 +  2\la  \lr{F(y_n), y_n-x_{n+1}}  -2\la \lr{F(z),y_n-z} \nonumber\\
  = {} & \n{x_n-z}^2  - \n{x_{n+1}-x_n}^2 +  2\la  \lr{F(y_n)-F(y_{n-1}),y_n - x_{n+1}} \nonumber\\ 
    & +  2\la \lr{F(y_{n-1}),y_n-x_{n+1}}  -2\la \lr{F(z),y_n-z}.
\end{align}
As $x_{n+1},x_{n-1}\in C$, we have by Lemma \ref{proj} 
\begin{align*}
\lr{x_n-x_{n-1} + \la F(y_{n-1}),x_n-x_{n+1}} & \leq 0, \\
\lr{x_n-x_{n-1} + \la F(y_{n-1}),x_n-x_{n-1}} & \leq 0.
\end{align*}
Adding these two inequalities yields
\begin{equation*}
\lr{x_n-x_{n-1}+\la F(y_{n-1}),y_n - x_{n+1}}\leq 0,
\end{equation*}
from which we conclude
\begin{align}\label{est1}
  2\la \lr{F(y_{n-1}), y_n-x_{n+1}}  & \leq 2 \lr{x_n-x_{n-1},x_{n+1}-y_n}   =  2 \lr{y_n-x_n,x_{n+1}-y_n} \nonumber \\ 
& =\n{x_{n+1}-x_n}^2 - \n{x_n-y_n}^2 - \n{x_{n+1}-y_n}^2,
\end{align}
since $x_n-x_{n-1} = y_n - x_n$.

We next turn to estimating $\lr{F(y_n)-F(y_{n-1}),x_{n+1}-y_n}$. It follows that
\begin{multline}\label{est2} 
2\la \lr{F(y_n)-F(y_{n-1}), y_n - x_{n+1}} \leq 2\la L \n{y_n-y_{n-1}}\n{x_{n+1}-y_n} \\ 
\leq \la L\bigl(\frac{1}{\sqrt 2}\n{y_n-y_{n-1}}^2 + \sqrt{2} \n{x_{n+1}-y_n}^2\bigr) \\
\leq \la L \frac{1}{\sqrt 2}\bigl((2+\sqrt 2)\n{y_n-x_n}^2+\sqrt 2\n{x_n-y_{n-1}}^2\bigr) +  \sqrt{2} \la L \n{x_{n+1}-y_n}^2 \\ 
=\la L (1+\sqrt 2)\n{y_n-x_n}^2+ \la L \n{x_n-y_{n-1}}^2 +  \sqrt{2} \la L \n{x_{n+1}-y_n}^2).
\end{multline}

Using \eqref{est1} and \eqref{est2}, we deduce in \eqref{snd} that
\begin{align*}
  \n{x_{n+1}-z}^2 \leq {} &  \n{x_n-z}^2 - (1-\la L(1+\sqrt 2))   \n{x_n-y_n}^2 + \la L \n{x_n-y_{n-1}}^2 \\
   & - (1-\sqrt 2 \la L) \n{x_{n+1}-y_n}^2   - 2\la \lr{F(z),y_n-z}.
\end{align*}
which completes the proof.
\end{proof}

Now we can state and prove our main convergence result.
\begin{theorem}\label{th_1}
Assume that (C1)--(C3) hold. Then any sequence $(x_n)$ generated by Algorithm \ref{alg} weakly converges to a solution  of $\eqref{vip}$.
\end{theorem}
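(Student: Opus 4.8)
The plan is to turn the one-step estimate of Lemma~\ref{main_lemma} into a monotonically decreasing energy functional, extract summability of the residual quantities, and then pass to a weak limit via Minty's lemma (Lemma~\ref{minty}) and Opial's lemma (Lemma~\ref{opial}). First I would rewrite the conclusion of Lemma~\ref{main_lemma} using $x_n-x_{n-1}=y_n-x_n$, so that $\n{x_n-x_{n-1}}^2=\n{x_n-y_n}^2$, and then observe that the offending positive term $\la L\n{x_n-y_{n-1}}^2$ at step $n$ is exactly the term $\la L\n{x_{n+1}-y_n}^2$ read one step earlier. Hence, with $D_n:=\la L\n{x_n-y_{n-1}}^2$, adding $D_{n+1}$ to both sides absorbs that term and leaves the coefficient of $\n{x_{n+1}-y_n}^2$ equal to $-\kappa$ where $\kappa:=1-(1+\sqrt 2)\la L$. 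The stepsize restriction $\la<\frac{\sqrt 2-1}{L}$ is precisely what forces $\kappa>0$, since $\frac{1}{1+\sqrt 2}=\sqrt 2-1$.

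The step I expect to be the main obstacle is the term $-2\la\lr{F(z),y_n-z}$: because $y_n=2x_n-x_{n-1}$ is a reflection it need not lie in $C$, so Minty's inequality cannot be applied to it directly and this term has no definite sign. I would resolve this by expanding $\lr{F(z),y_n-z}=2\phi_n-\phi_{n-1}$, where $\phi_n:=\lr{F(z),x_n-z}\geq 0$ for $n\geq 1$ (here $x_n\in C$ and $z\in S$), and splitting $-2\la(2\phi_n-\phi_{n-1})=(2\la\phi_{n-1}-2\la\phi_n)-2\la\phi_n$ into a telescoping part and a nonnegative remainder. Setting
$$\Psi_n:=\n{x_n-z}^2+\la L\n{x_n-y_{n-1}}^2+2\la\phi_{n-1},$$
the estimate collapses to
$$\Psi_{n+1}\leq\Psi_n-\kappa\bigl(\n{x_n-y_n}^2+\n{x_{n+1}-y_n}^2\bigr)-2\la\phi_n,$$
in which every subtracted quantity is nonnegative. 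Since $\Psi_n\geq 0$, Lemma~\ref{lim_seq} gives that $(\Psi_n)$ is bounded, hence $(x_n)$ is bounded, and the subtracted terms tend to zero; in particular $\n{x_n-x_{n-1}}\r 0$, $\n{x_{n+1}-y_n}\r 0$, and $\phi_n\r 0$. As $(\Psi_n)$ is also non-increasing and bounded below it converges, and because $\la L\n{x_n-y_{n-1}}^2\r 0$ and $\phi_{n-1}\r 0$, the limit $\lim_n\n{x_n-z}$ exists for every $z\in S$.

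It then remains to identify any weak cluster point. Let $x_{n_k}\rr\x$; then $\x\in C$ (the set $C$ is weakly closed), and $y_{n_k}\rr\x$ and $x_{n_k+1}\rr\x$ by the vanishing of $\n{x_n-x_{n-1}}$ and $\n{x_{n+1}-y_n}$. Starting from $x_{n+1}=P_C(x_n-\la F(y_n))$, Lemma~\ref{proj}(i) yields, for every $w\in C$, the inequality $\lr{F(y_n),x_{n+1}-w}\leq\frac1\la\lr{x_n-x_{n+1},x_{n+1}-w}$; combining it with the monotonicity of $F$ (to pass from $F(y_n)$ to $F(w)$) produces a lower bound for $\lr{F(w),w-x_{n+1}}$ whose error terms are all controlled by $\n{x_n-x_{n+1}}$, $\n{x_{n+1}-y_n}$, and the boundedness of $F(y_n)$ and $F(w)$. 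Passing to the limit along $n_k$ gives $\lr{F(w),w-\x}\geq 0$ for all $w\in C$, so $\x\in S$ by Lemma~\ref{minty}. Finally, since $\lim_n\n{x_n-z}$ exists for each $z\in S$ and every weak cluster point lies in $S$, the standard Opial argument (Lemma~\ref{opial}) forces the cluster point to be unique, whence the whole bounded sequence $(x_n)$ converges weakly to a solution of~\eqref{vip}.
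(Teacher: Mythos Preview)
Your proof is correct and follows essentially the same architecture as the paper's: the same Lyapunov quantity $\Psi_n=\n{x_n-z}^2+\la L\n{x_n-y_{n-1}}^2+2\la\lr{F(z),x_{n-1}-z}$ (this is the paper's $a_n$), the same telescoping treatment of the troublesome term $\lr{F(z),y_n-z}$, and the same Minty-based identification of weak cluster points.

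The one genuine difference is in the final uniqueness step. You expand $\lr{F(z),y_n-z}=2\phi_n-\phi_{n-1}$ as an \emph{equality} and keep the leftover $-2\la\phi_n$ on the right, which yields the extra conclusion $\phi_n\to 0$; together with $\n{x_n-y_{n-1}}\to 0$ this lets you deduce that $\lim_n\n{x_n-z}$ exists for every $z\in S$ and then invoke the standard Opial argument. The paper instead uses the \emph{inequality} $\lr{F(z),y_n-z}\geq\phi_n-\phi_{n-1}$ (dropping one copy of $\phi_n$), so it only obtains convergence of the composite quantity $\n{x_n-z}^2+2\la\lr{F(z),x_{n-1}-z}$, and must run a bespoke Opial-type contradiction on that composite expression using Lemma~\ref{liminf}. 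Your route is a little more economical here: by retaining the exact identity you avoid the need for the modified Opial argument and for Lemma~\ref{liminf} altogether.
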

 \begin{proof}
 Let us show that the sequence $(x_n)$ is bounded. Fix any $z\in S$.
Since $ 1-\sqrt 2 \la L > \la L$ and
\begin{align*}
  \lr{F(z),y_n-z} & = 2\lr{F(z),x_n - z} - \lr{F(z),x_{n-1}-z}\\ & \geq
\lr{F(z),x_n - z} - \lr{F(z),x_{n-1}-z},
\end{align*}
we can deduce from inequality~\eqref{ineq_lemma} that
\begin{equation}
\begin{aligned}\label{rewrite_ineq}
    \n{x_{n+1}-z}^2  +{}& \la L \n{x_{n+1}-y_n}^2 +2 \la \lr{F(z),x_n -z }\\
   \leq {}& \n{x_n-z}^2  +\la L \n{x_n-y_{n-1}}^2 + 2\la   \lr{F(z),x_{n-1}-z} \\
    & -  (1-\la L(1+\sqrt 2)) \n{x_n-x_{n-1}}^2.
\end{aligned}
\end{equation}
For $n\geq 2$ let
\begin{align*}
a_n & = \n{x_n-z}^2  + \la L \n{x_n-y_{n-1}}^2 + 2\la   \lr{F(z),x_{n-1}-z},\\
b_n & =  (1-\la L(1+\sqrt 2))\n{x_n-x_{n-1}}^2.
\end{align*}
Then we can rewrite inequality~\eqref{rewrite_ineq} as $a_{n+1}\leq a_n - b_n$. By Lemma~\ref{lim_seq}, we conclude that $(a_n)$ is bounded and $\lim_{n\r \infty}\n{x_{n}-x_{n-1}} = 0$. Therefore, the sequences $(\n{x_n-z})$ and hence $(x_n)$ are also
bounded. From the inequality 
$$\n{x_{n+1}-y_n}\leq \n{x_{n+1}-x_n}+\n{x_n-y_n} = \n{x_{n+1}-x_n}+\n{x_n-x_{n-1}}$$
we also have that $ \lim_{n\r \infty}\n{x_{n+1}-y_{n}} = 0$.
  
As $(x_n)$ is bounded, there exist a subsequence $(x_{n_i})$ of $(x_n)$ such that $(x_{n_i})$ converges weakly to some  $x^*\in H$. It is clear that $(y_{n_i})$ is also convergent to that $x^*$. We  show $x^* \in S$. From Lemma~\ref{proj} it follows that
$$\lr{x_{n_i+1}-x_{n_i} + \la F(y_{n_i}), y-x_{n_i+1}} \geq 0 \quad \forall y\in C.$$
From this we conclude that,  for all $y\in C$,
\begin{align}\label{weak} 
  0 &\leq \lr{x_{n_i+1}-x_{n_i}, y-x_{n_i+1}}+\la \lr{F(y_{n_i}), y-y_{n_i}} +\la \lr{F(y_{n_i}),y_{n_i}-x_{n_i+1}} \nonumber \\  
 & \leq \lr{x_{n_i+1}-x_{n_i}, y-x_{n_i+1}} +\la \lr{F(y), y-y_{n_i}} +\la \lr{F(y_{n_i}),y_{n_i}-x_{n_i+1}}.
\end{align}
In the last inequality we used condition (C2). Taking the limit as $i\r \infty $ in \eqref{weak} and using that $\lim_{i\r\infty}\n{x_{n_i+1}-x_{n_i}} = \lim_{i\r\infty } \n{y_{n_i+1}-y_{n_i}} = 0$, we obtain 
$$0\leq \lr{F(y), y-x^*} \quad \forall y\in C,$$
which implies by Lemma \ref{minty} that $x^* \in S$.

Let us show $x_{n}\rr x^*$. From \eqref{rewrite_ineq} it follows that the sequence $(a_n)$ is monotone for any $z\in S$. Taking into account its boundedness we deduce that it is convergent. At last, the sequence $(\n{x_{n}-y_{n-1}}^2)$ is also convergent, therefore, $(\n{x_n-z}^2+2\la \lr{F(z),x_{n-1}-z})$ is convergent. 

We want to prove that $(x_n)$ is weakly convergent. On the contrary, assume that the sequence $(x_n)$ has at least two weak cluster points $\bar x\in S$ and $\tilde x\in S$ such that $\bar x \neq \tilde x$. Let $(x_{n_k})$ be a sequence such that  $x_{n_k}\rr \bar x$ as $k\to \infty $. Then by Lemma \ref{liminf} and \ref{opial} we have
\begin{multline*}
  \lim_{n\to \infty}\bigl(\n{x_n-\bar x}^2+ 2\la \lr{F(\bar x),x_n-\bar x}\bigr)\\ = \lim_{k \to \infty} \big(\n{x_{n_k}-\bar x}^2+ 2\la \lr{F(\bar x),x_{n_k} - \bar x}\bigr)= \lim_{k\to \infty}\n{x_{n_k}-\bar x}^2 \\= \liminf_{k\to \infty}\n{x_{n_k}-\bar x}^2  < \liminf_{k\to \infty}\n{x_{n_k}-\tilde x}^2 \\ \leq \liminf_{k\to \infty}\n{x_{n_k}-\tilde x}^2 + 2\la  \liminf_{k\to \infty} \lr{F(\tilde x),x_{n_k}-\tilde x} \\\leq  \liminf_{k\to \infty}\bigl(\n{x_{n_k}-\tilde x}^2 + 2\la  \lr{F(\tilde x),x_{n_k}-\tilde x}\bigr) \\ = \lim_{n\to \infty}\bigl(\n{x_{n}-\tilde x}^2 + 2\la  \lr{F(\tilde x),x_{n}-\tilde x}\bigr). 
\end{multline*}
We can now proceed analogously to the proof that
$$
\lim_{n\to \infty}\bigl(\n{x_{n}-\tilde x}^2 + 2\la \lr{F(\tilde x\bigr), x_{n}-\tilde x}\bigr) < \lim_{n\to \infty}\bigl(\n{x_n-\bar x}^2+ 2\la \lr{F(\bar x),x_n-\bar x}\bigr),
$$
 which is impossible. Hence we can conclude that $x_n$ weakly converges to  some $x^*\in S$.
\end{proof}

It is well-known (see \cite{tseng:95}) that under some suitable conditions the extragradient method has R-linear rate of convergence. In the following theorem we  show that our method has the same rate of convergence under a strongly monotonicity assumption of the mapping $F$, i.e.,
$$\text{(C2*)}\qquad \qquad \lr{F(x)-F(y),x-y}\geq m\n{x-y}^2 \quad \forall x,y \in H$$
for some $m>0$.

\begin{theorem}\label{R-linear}
Assume that (C1), (C2*), (C3) hold. Then any sequence $(x_n)$ generated by
Algorithm~\ref{alg} converges to the solution of \eqref{vip} at least R-linearly.  
\end{theorem}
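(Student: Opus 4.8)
The plan is to rerun the computation behind Lemma~\ref{main_lemma} with the strong monotonicity (C2*) replacing (C2), and then to force the resulting one-step estimate into the exact shape demanded by Lemma~\ref{conv_seq}. Throughout, $x^*$ denotes the solution of \eqref{vip}, which is now unique, so that the weak convergence already furnished by Theorem~\ref{th_1} will upgrade to strong (R-linear) convergence once the rate is in hand.

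First I would revisit the single place in the proof of Lemma~\ref{main_lemma} where monotonicity enters, namely the bound $2\la\lr{F(y_n)-F(x^*),y_n-x^*}\geq 0$. Under (C2*) this reads $2\la\lr{F(y_n)-F(x^*),y_n-x^*}\geq 2\la m'\n{y_n-x^*}^2$ for every $m'\in(0,m]$, since (C2*) with constant $m$ implies (C2*) with any smaller constant. The whole derivation then goes through verbatim and produces one extra term $-2\la m'\n{y_n-x^*}^2$ on the right-hand side of \eqref{ineq_lemma}. Writing $\a:=2\la m'$ and expanding this term through Lemma~\ref{u_v} with $u=x_n-x^*$, $v=x_{n-1}-x^*$ splits it into $-2\a\n{x_n-x^*}^2+\a\n{x_{n-1}-x^*}^2-2\a\n{x_n-x_{n-1}}^2$; the first two pieces are precisely the $(1-2\a)$ and $\a$ coefficients appearing in \eqref{rate_1}.

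Next I would reproduce the two manipulations from the proof of Theorem~\ref{th_1}: rewriting $\lr{F(x^*),y_n-x^*}=2\lr{F(x^*),x_n-x^*}-\lr{F(x^*),x_{n-1}-x^*}$ via $y_n=2x_n-x_{n-1}$, and carrying the projection term to the left using $1-\sqrt2\la L>\la L$. Setting $p_k:=2\la\lr{F(x^*),x_k-x^*}\geq 0$ (nonnegative because $x_k\in C$ and $x^*\in S$), these steps turn the estimate into
\begin{multline*}
\n{x_{n+1}-x^*}^2+2p_n+(1-\sqrt2\la L)\n{x_{n+1}-y_n}^2 \leq (1-2\a)\n{x_n-x^*}^2+\a\n{x_{n-1}-x^*}^2\\
+p_{n-1}+\la L\n{x_n-y_{n-1}}^2-(\kappa+2\a)\n{x_n-x_{n-1}}^2,
\end{multline*}
where $\kappa:=1-(1+\sqrt2)\la L>0$ by the stepsize condition. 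The crucial choice is
\begin{equation*}
a_n:=\n{x_n-x^*}^2+2p_{n-1},\qquad b_n:=(1-\sqrt2\la L)\n{x_n-y_{n-1}}^2,
\end{equation*}
so that the left-hand side above is \emph{exactly} $a_{n+1}+b_{n+1}$. Subtracting the target $(1-2\a)a_n+\a a_{n-1}+\b b_n$ from the right-hand side then leaves
\begin{equation*}
(4\a-1)p_{n-1}-2\a p_{n-2}+(\la L-\b(1-\sqrt2\la L))\n{x_n-y_{n-1}}^2-(\kappa+2\a)\n{x_n-x_{n-1}}^2,
\end{equation*}
and I would check that each of the four summands is $\leq 0$ as soon as $\a\leq\frac14$ and $\b\geq\frac{\la L}{1-\sqrt2\la L}$. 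This establishes \eqref{rate_1} for all $\a\in(0,\a_0)$ with $\a_0:=\min\{\frac14,2\la m\}$ and the fixed $\b:=\frac{\la L}{1-\sqrt2\la L}$, which lies in $(0,1)$ precisely because $\la<\frac{\sqrt2-1}{L}$. Lemma~\ref{conv_seq} then yields $\n{x_n-x^*}^2\leq a_n\leq\c^nM$, i.e.\ $\n{x_n-x^*}\leq\sqrt M(\sqrt\c)^{\,n}$, the claimed R-linear rate.

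The main obstacle is packaging the two stray quantities — the variational inequality inner product $\lr{F(x^*),y_n-x^*}$, which has no definite sign because the reflected point $y_n=2x_n-x_{n-1}$ need not lie in $C$, and the projection residual $\n{x_n-y_{n-1}}^2$ — into genuinely contracting pieces compatible with Lemma~\ref{conv_seq}. The resolution is twofold: absorb the inner product into $a_n$ with weight exactly $2$, matching the $-2p_n+p_{n-1}$ that the reflection produces, so the two-step recursion of Lemma~\ref{conv_seq} telescopes it; and retain the \emph{full} coefficient $(1-\sqrt2\la L)$ in $b_n$ rather than only $\la L$, so that the required contraction factor is merely $\b=\frac{\la L}{1-\sqrt2\la L}<1$. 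The surplus term $-(\kappa+2\a)\n{x_n-x_{n-1}}^2$ is not even needed to close the estimate, which is a reassuring sign that the stepsize slack $\kappa>0$ leaves room to spare.
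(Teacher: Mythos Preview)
Your proof is correct and follows essentially the same route as the paper: rerun Lemma~\ref{main_lemma} with (C2*) in place of (C2), expand $\n{y_n-x^*}^2$ via Lemma~\ref{u_v}, split $\lr{F(x^*),y_n-x^*}$ using $y_n=2x_n-x_{n-1}$, and feed the result into Lemma~\ref{conv_seq}. The only difference is bookkeeping: you absorb the nonnegative inner-product term $2p_{n-1}=4\la\lr{F(x^*),x_{n-1}-x^*}$ into $a_n$, which forces the extra cap $\a\leq\tfrac14$ but lets you take $\b=\tfrac{\la L}{1-\sqrt2\la L}$; the paper instead absorbs $2p_{n-1}$ into $b_n$, keeps $\a_0=2\la m$ uncapped, and pays with the larger $\b=\max\{\tfrac{\la L}{1-\sqrt2\la L},\tfrac12\}$. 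Either allocation closes the estimate.
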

\begin{proof}
Since $F$ is  strongly monotone, \eqref{vip} has a unique solution, which we denote by $z$. Note that by Lemma~\ref{u_v} 
\begin{equation*}
  \n{y_n - z}^2   =2\n{x_n-z}^2-\n{x_{n-1}-z}^2 + 2\n{x_{n}-x_{n-1}}^2\geq 2\n{x_n-z}^2-\n{x_{n-1}-z}^2 
\end{equation*}
From this and from (C2*) we conclude that, for all $m_1\in (0,m]$, 
\begin{multline}\label{str_mon}
2\la\bigl(\lr{F(y_n)-F(z), y_n- z} - m_1(2\n{x_n-z}^2 - \n{x_{n-1}-z}^2)\bigr)  \\
\geq   2\la\bigl(\lr{F(y_n)-F(z), y_n - z} - m\n{y_n-z}^2\bigr)\geq 0.
\end{multline}
From now on, let  $m_1$ be any number in $(0,m]$. Then adding the left part of~\eqref{str_mon} to the right side of~\eqref{fst}, we get
\begin{align*} 
  \n{x_{n+1}-z}^2  \leq {} &  \n{x_n-z}^2 - \n{x_{n+1}-x_{n}}^2 +  2\la \lr{F(y_n), y_n-x_{n+1}} \nonumber  \\
 & - 2\la  \lr{F(z),y_n-z}  - 2\la m_1 (2\n{x_n-z}^2  - \n{x_{n-1}-z}^2)\nonumber \\
 ={} & (1-4\la m_1)\n{x_n-z}^2  -\n{x_{n+1}-x_n}^2 + 2\la m_1\n{x_{n-1}-z}^2  \nonumber \\
& + 2\la \lr{F(y_n)-F(y_{n-1}),y_n - x_{n+1}}    + 2\la \lr{F(y_{n-1}),y_n-x_{n+1}} \nonumber \\ 
& - 2\la \lr{F(z),y_n-z}.
\end{align*}
For items $\lr{F(y_{n-1}),y_n-x_{n+1}}$ and $\lr{F(y_n)-F(y_{n-1}),y_n - x_{n+1}}$ we use  estimations \eqref{est1} and \eqref{est2} from Lemma~\ref{main_lemma}. Therefore, we obtain
\begin{multline*}
  \n{x_{n+1}-z}^2 \leq  (1-4\la   m_1)\n{x_n-z}^2+2\la m_1   \n{x_{n-1} - z}^2 \\ 
- (1-\la L(1+\sqrt 2))   \n{x_n-x_{n-1}}^2  - (1-\sqrt 2   \la L) \n{x_{n+1}-y_n}^2 \\  
+  \la L \n{x_n-y_{n-1}}^2 - 2\la \lr{F(z),y_n-z}.
\end{multline*}
From the last inequality it follows that 
\begin{multline}\label{rate_ineq1}
   \n{x_{n+1}-z}^2 + (1-\sqrt 2 \la L) \n{x_{n+1}-y_n}^2 + 4\la\lr{F(z),x_n-z} \leq  (1-4\la   m_1)\n{x_n-z}^2 \\ 
+ 2\la m_1   \n{x_{n-1} - z}^2 + \la L   \n{x_n-y_{n-1}}^2 + 2\la   \lr{F(z),x_{n-1}-z} \\
\leq (1-4\la  m_1)\n{x_n-z}^2 + 2\la m_1   \n{x_{n-1} - z}^2 \\
+\max\Bigl\{\frac{\la L}{1-\sqrt 2 \la L},\frac 1 2 \Bigr\}\bigl((1-\sqrt 2\la L)\n{x_n -y_{n-1}}^2 + 4\la \lr{F(z),x_{n-1}-z}\bigr).
\end{multline}
In the first inequality we used that $\lr{F(z),y_n-z} = 2\lr{F(z),x_n-z}-\lr{F(z),x_{n-1}-z}$, and in the second we used that 
\begin{align*}
\la L & \leq (1-\sqrt2 \la L) \max\Bigl\{\frac{\la L}{1-\sqrt 2  \la L},\frac 1 2 \Bigr\}, \\
2  & \leq 4\max\Bigl\{\frac{\la L}{1-\sqrt 2  \la L},\frac 1 2 \Bigr\} .
\end{align*}
Set
\begin{align*}
a_{n} & = \n{x_n - z}^2, \\ 
b_n   & =  (1-\sqrt  2 \la L)\n{x_n - y_{n-1}}^2 + 4\la   \lr{F(z),x_{n-1}-z},\\
\b  & = \max\Bigl\{\frac{\la L}{1-\sqrt 2  \la L},\frac 1 2 \Bigr\},\\
\a   & = 2\la m_1\\ 
\a_0 & = 2\la m. 
\end{align*}
As $m_1$ is arbitrary in $(0,m)$, we can rewrite \eqref{rate_ineq1} in the new notation as
\begin{equation*}
  a_{n+1} + b_{n+1} \leq (1-2\a)a_n   + \a a_{n-1}  + \b b_{n} \quad \forall   \a\in (0,\a_0] .
\end{equation*}
Since $\b < 1$, we can conclude by Lemma~\ref{rate_1} that $a_{n}\leq \c^n M$ for some $\c\in(0,1)$ and $M>0$. This means that $(x_n)$ converges to $z$ at least R-linearly. 
\end{proof}
\section{Modified Algorithm}\label{modif_algorithm}
The main shortcoming of all algorithms mentioned in \S\ref{intro}  is a requirement to know the Lipschitz constant or at least to know some estimation of it. Usually it is difficult to  estimate the Lipschitz constant  more or less precisely, thus stepsizes will be quite tiny and, of course, this is not practical. For this reason,  algorithms with constant stepsize are  not applicable in most cases of interest. The usual approaches to  overcome this difficulty  consist in  some prediction of a stepsize with its further correction  (see \cite{khobotov,marcotte:91,tseng00}) or  in a usage of  Armijo-type linesearch  procedure along  a feasible direction (see \cite{iusem:97,solodov:1999}). Usually the latter approach is more effective, since very often the former approach requires too many projections onto the feasible set per iteration.

Nevertheless, our modified method uses the prediction-correction strategy. However, in contrast to algorithms in \cite{khobotov,marcotte:91,tseng00}, we need at most two projections per iteration. This is explained by the fact that for direction $y_n$ in Algorithm~\ref{alg} we use a very simple and cheap formula: $y_n = 2x_n - x_{n-1}$. Although we can not explain this theoretically, but numerical experiments show that cases  with two projections per iteration are quite rare, so usually we have only one projection per iteration that is a drastic contrast to other existing methods.

Looking on the proof of Theorem~\ref{th_1}, we can conclude that  inequality~\eqref{est2} is the only
 place where we use Lipschitz constant $L$. Therefore, choosing $\la_n$  such that the inequality $\la_n \n{F(y_n)-F(y_{n-1})} \leq \a
 \n{y_n-y_{n-1}}$ holds for some  fixed $\a < 1$ we can obtain the  similar estimation as in  \eqref{est2}.
All this leads us to the following 
\begin{algo}\label{algModif0}
\begin{equation*}
\begin{cases}  
\text{Choose}\quad  x_0=y_0\in H, \la_0>0, \a\in (0, \sqrt 2 -1).\\
\text{Choose}\quad \la_n \quad \text{s.t.}\quad  \la_n \n{F(y_n)-F(y_{n-1})} \leq \a \n{y_n-y_{n-1}}, n>0.\\
x_{n+1} = P_C(x_n-\la_n F(y_n)),\\
y_{n+1} = 2x_{n+1} - x_n.
\end{cases}
\end{equation*}
\end{algo}

Although numerical results showed us effectiveness of Algorithm~\ref{algModif0}, we can not prove its convergence for all
cases. Nevertheless, we want to notice that we did not find any example where  Algorithm~\ref{algModif0} did not work. Thus, even Algorithm~\ref{algModif0} seems to be very reliable for many problems.

Now our task is to  modify Algorithm \ref{algModif0} in a such way that we will be able to prove
convergence of the obtained algorithm. For this we need to distinguish  the good cases, where
Algorithm \ref{algModif0} works well, from the bad ones, where it possibly does not.

From now on we adopt the convention that $0/ 0 = +\infty$. Clearly, it follows that
$1/0 = +\infty$ as well. The following algorithm gets round
the difficulty of bad cases of  Algorithm~\ref{algModif0}.
\begin{algo}\label{algModif1}
\begin{remunerate}
  \item Choose $x_0 \in C$, $\la_{-1}>0$,    $\t_0 = 1$, $\a\in (0, \sqrt 2 - 1)$ and
    some large $\bar \la>0$. Compute
    \begin{align*}
      y_0 & = P_C(x_0 - \la_{-1} F(x_0)),\\
      \la_0 & = \min\Bigl\{\frac{\a \n{x_0-y_{0}}}{\n{F(x_0) - F(y_0)}}, \bar \la \Bigr\},\\
      x_1 & = P_C (x_0 - \la_0 F(y_0)).
     \end{align*}
  \item \label{modifstep} Given $x_n$ and $x_{n-1}$, set $\t_n = 1 $ and let $\la(y,\t)$ be defined by
\begin{equation}  \label{la_def}
\la(y,\t):= \min\Bigl\{\frac{\a \n{y-y_{n-1}}}{\n{F(y) -  F(y_{n-1})}},\frac{1+\t_{n-1}}{\t} \la_{n-1}, \bar \la \Bigr\}.
 \end{equation}
 Compute
\begin{align*}
    y_n & = 2x_n - x_{n-1}, \\
\la_n & = \la(y_n,\t_n),\\
x_{n+1} &  = P_C(x_n-\la_n F(y_n)).
\end{align*}
\item If $ r(x_n,y_n)=0$ then stop: $x_n=y_n=x_{n+1}$ is a
  solution. Otherwise compute
\begin{align*}
  t_n = {} &- \n{x_{n+1}-x_n}^2 + 2\la_n\lr{F(y_n),y_n-x_{n+1}}+(1-\a (1+\sqrt 2))\n{x_n-y_n}^2\nonumber \\ 
  & - \a \n{x_n-y_{n-1}}^2 + (1-\sqrt{2}\a)\n{x_{n+1}-y_n}^2.
\end{align*}
\item \label{step_la_n} If $t_n \leq 0$ then set   $n:=n+1$ and go to  step~\ref{modifstep}. Otherwise
  we have two cases $\la_n\geq \la_{n-1}$ and $\la_{n} < \la_{n-1}$.
\begin{romannum}
\item  If $\la_n \geq \la_{n-1}$  then choose $\la_n'\in [\la_{n-1},\la_n]$ such that 
\begin{equation}\label{la_n'}
\n{\la_n' F(y_n) - \la_{n-1}F(y_{n-1})}\leq \a \n{y_n-y_{n-1}}.
\end{equation}
Compute $$x_{n+1} = P_C(x_n - \la_n' F(y_n)).$$
Set $\la_{n}:=\la_n'$, $n:=n+1$ and go to step~\ref{modifstep}.
\item
If $\la_n<\la_{n-1}$ then find $\t_n'\in (0,1]$  such that 
\begin{align} \label{la_nF}
&y_{n}' = x_n + \t_n' (x_n-x_{n-1}) \nonumber \\
& \la(y_n',\t_n') \geq \t_n' \la_{n-1}.
\end{align}
Then choose
$\la_n'\in [\t_n' \la_{n-1}, \la(y_n',\t_n')]$ such that
\begin{equation}\label{la_nD}
\n{\la_n' F(y_n') - \t_n' \la_{n-1}  F(y_{n-1})}\leq \a \n{y_n'-y_{n-1}}.
\end{equation}

Compute
\begin{align*}
x_{n+1} & = P_C(x_n -\la_n' F(y_n'))
\end{align*}
Set $\la_n:=\la_{n}'$, $\t_n:=\t_n'$, $y_n:=y_n'$, $n:=n+1$ and go to step~\ref{modifstep}.
\end{romannum}
\end{remunerate}
\end{algo}
It is clear that  on every iteration in Algorithm~\ref{algModif1} we need to use a residual function with different $\la$, namely,  $r(x_n,y_n) = \n{y_n-P_C(x_n-\la_n F(y_n))} + \n{x_n-y_n}$.

First, let us show that Algorithm~\ref{algModif1} is correct, i.e., it is always possible to choose   $\la_n'$ and $\t_n'$ on steps (\ref{step_la_n}.i) and (\ref{step_la_n}.ii). For this we need two simple lemmas.

\begin{lemma}\label{4i}
  Step (\ref{step_la_n}.i) in Algorithm~\ref{algModif1} is well-defined.
\end{lemma}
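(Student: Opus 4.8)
The plan is to show that the admissible set of step (\ref{step_la_n}.i) is nonempty by checking that its left endpoint $\la_n' = \la_{n-1}$ already satisfies the defining inequality~\eqref{la_n'}. Since in this case we are assuming $\la_n \geq \la_{n-1}$, the point $\la_{n-1}$ lies in the interval $[\la_{n-1},\la_n]$, so exhibiting it as a feasible choice is enough to conclude that the step is well-defined.

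First I would extract the crucial bound coming from the definition of $\la_n = \la(y_n,\t_n)$ in~\eqref{la_def} (with $\t_n = 1$): because $\la_n$ is a minimum whose first entry is $\a\n{y_n-y_{n-1}}/\n{F(y_n)-F(y_{n-1})}$, we always have
$$\la_n \n{F(y_n)-F(y_{n-1})}\leq \a\n{y_n-y_{n-1}}.$$
Combining this with the case hypothesis $\la_{n-1}\leq\la_n$ and evaluating~\eqref{la_n'} at $\la_n'=\la_{n-1}$ gives
$$\n{\la_{n-1}F(y_n)-\la_{n-1}F(y_{n-1})}=\la_{n-1}\n{F(y_n)-F(y_{n-1})}\leq\la_n\n{F(y_n)-F(y_{n-1})}\leq\a\n{y_n-y_{n-1}},$$
which is precisely~\eqref{la_n'}. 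Hence $\la_n'=\la_{n-1}$ is admissible, and the set of valid $\la_n'$ is nonempty.

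To round this off I would note that the map $\la\mapsto\n{\la F(y_n)-\la_{n-1}F(y_{n-1})}$ is continuous on $[\la_{n-1},\la_n]$ and does not exceed $\a\n{y_n-y_{n-1}}$ at the left endpoint, so the admissible set is in fact a subinterval containing $\la_{n-1}$; this makes the freedom in choosing $\la_n'$ explicit, though nonemptiness alone settles the lemma. I do not expect a genuine obstacle here: the only point requiring care is the bookkeeping around the convention $0/0=+\infty$ (and $1/0=+\infty$), which guarantees that the inequality $\la_n\n{F(y_n)-F(y_{n-1})}\leq\a\n{y_n-y_{n-1}}$ remains valid even in the degenerate situation $F(y_n)=F(y_{n-1})$, and therefore that $\la_{n-1}$ is always a legitimate choice for $\la_n'$.
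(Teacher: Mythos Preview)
Your proposal is correct and follows essentially the same route as the paper: you verify that $\la_n'=\la_{n-1}$ satisfies \eqref{la_n'} via the chain $\la_{n-1}\n{F(y_n)-F(y_{n-1})}\leq \la_n\n{F(y_n)-F(y_{n-1})}\leq \a\n{y_n-y_{n-1}}$, which is exactly the paper's argument. The additional remarks on continuity and the $0/0$ convention are fine but not needed for the lemma.
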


\begin{proof}
 From the inequality 
$$\n{\la_{n-1} F(y_n) - \la_{n-1}F(y_{n-1})}\leq \la_n\n{F(y_n)-F(y_{n-1})}\leq \a \n{y_n-y_{n-1}}$$
we can see that it is sufficient to take 
$\la_{n}'=\la_{n-1}$. (However, it seems better for practical reasons to choose $\la_n'$ as great as possible).
\end{proof}

\begin{lemma}\label{4ii}
  Step (\ref{step_la_n}.ii) in Algorithm~\ref{algModif1} is well-defined.
\end{lemma}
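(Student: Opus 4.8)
The plan is to verify the two selection steps that make up case (\ref{step_la_n}.ii) in turn. First I must exhibit a parameter $\t_n'\in(0,1]$ such that, with $y_n'=x_n+\t_n'(x_n-x_{n-1})$, the inequality $\la(y_n',\t_n')\geq \t_n'\la_{n-1}$ from \eqref{la_nF} holds; this guarantees that the interval $[\t_n'\la_{n-1},\la(y_n',\t_n')]$ is nonempty. Second I must produce a stepsize $\la_n'$ in that interval obeying \eqref{la_nD}. Once the first selection is in hand, the second will follow from exactly the elementary estimate used in Lemma~\ref{4i}, so the real content lies in producing $\t_n'$.

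For the first selection, the key observation is that the first entry in the definition \eqref{la_def} of $\la(y,\t)$ admits a lower bound that is uniform in $y$: by the Lipschitz condition (C3) we have $\n{F(y)-F(y_{n-1})}\leq L\n{y-y_{n-1}}$, hence $\frac{\a\n{y-y_{n-1}}}{\n{F(y)-F(y_{n-1})}}\geq \frac{\a}{L}$ (the degenerate subcase $y=y_{n-1}$ being covered by the convention $0/0=+\infty$). Writing $y_n'(\t):=x_n+\t(x_n-x_{n-1})$, this yields $\la(y_n'(\t),\t)\geq \min\bigl\{\frac{\a}{L},\frac{1+\t_{n-1}}{\t}\la_{n-1},\bar\la\bigr\}$. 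As $\t\r 0^+$ the middle term tends to $+\infty$, so the right-hand side stabilizes at the positive constant $\min\bigl\{\frac{\a}{L},\bar\la\bigr\}$, while $\t\la_{n-1}\r 0$. Therefore $\la(y_n'(\t),\t)\geq \t\la_{n-1}$ for all sufficiently small $\t>0$, and any such $\t$ may be taken as $\t_n'$. (At the opposite endpoint $\t=1$ one has $y_n'(1)=y_n$ and $\la(y_n',1)=\la_n<\la_{n-1}$, which is precisely why the case hypothesis $\la_n<\la_{n-1}$ compels us to shrink $\t$ rather than keep $\t=1$.)

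For the second selection I would simply take $\la_n'=\t_n'\la_{n-1}$, the left endpoint of the interval. Then the left side of \eqref{la_nD} equals $\t_n'\la_{n-1}\n{F(y_n')-F(y_{n-1})}$, and chaining $\t_n'\la_{n-1}\leq \la(y_n',\t_n')\leq \frac{\a\n{y_n'-y_{n-1}}}{\n{F(y_n')-F(y_{n-1})}}$ bounds this by $\a\n{y_n'-y_{n-1}}$, as required. The only genuine obstacle is the existence of $\t_n'$: everything hinges on the uniform lower bound $\a/L$ for the first term of $\la$, which is what makes the limiting argument as $\t\r 0^+$ succeed, and the only delicate point is handling the $0/0$ convention in the degenerate subcase $y_n'=y_{n-1}$.
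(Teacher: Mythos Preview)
Your proof is correct and rests on the same key observation as the paper's: the Lipschitz condition (C3) forces the first term in \eqref{la_def} to be at least $\a/L$. The paper, however, pushes this one step further: by induction on $n$ it shows that $\la_{n-1}\geq \a/L$ as well, so that all three terms in the minimum are bounded below by $\a/L$ and hence $\la(y,\t)\geq \a/L$ uniformly for $\t\in(0,1]$. This lets the paper name an explicit value $\t_n'=\a/(\la_{n-1}L)\in(0,1]$ rather than invoke a limit as $\t\to 0^+$. Your limiting argument is valid and arguably cleaner at this isolated stage, but the paper's inductive lower bound $\la_n\geq \a/L$ is reused later in the convergence theorem (to guarantee that the cluster points of $(\la_n)$ are positive), so establishing it here is not merely cosmetic. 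For the second selection you take $\la_n'=\t_n'\la_{n-1}$, exactly as the paper does by appeal to Lemma~\ref{4i}.
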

\begin{proof}
  First, let us show that $\la(y,\t) \geq \a/L$ for all $y, y_n\in H$ and $\t\in (0,1]$. It is clear that for every $u,v\in H$
$$\frac{\a\n{u-v}}{\n{F(u)-F(v)}}\geq \frac{\a\n{u-v}}{L\n{u-v}} = \frac{\a}{L}.$$
Then $\la_0\geq \a/L$ and hence by induction  
$$ \la(y,\t):= \min\Bigl\{\frac{\a \n{y-y_{n-1}}}{\n{F(y) - F(y_{n-1})}}, \frac{1+\t_{n-1}}{\t} \la_{n-1}, \bar \la \Bigr\}\geq \frac \a L.$$ 
Therefore, it is sufficient to take $\t_n' =\dfrac{\a}{\la_{n-1}L}\in (0,1]$. (But, as above, it seems better to choose $\t_n'$ as great as possible).

At last, we can prove the existence of $\la_n' \in [\t_n' \la_{n-1}, \la(y_n',\t_n')]$ such that \eqref{la_nD} will hold by the same arguments as in  Lemma~\ref{4i}.
\end{proof}

The following lemma  yields an analogous inequality to \eqref{ineq_lemma}.
\begin{lemma}\label{main_modif_lemma}
  Let $(x_n)$ and $(y_n)$ be two sequences generated by Algorithm~\ref{algModif1} and let $z\in S$, $\a \in (0,\sqrt 2 -1)$. Then
\begin{align}\label{modif_ineq}
  \n{x_{n+1}-z}^2 \leq {} &  \n{x_n-z}^2 - (1-\a (1+\sqrt 2))\n{x_n-y_n}^2  - (1-\sqrt 2\a) \n{x_{n+1}-y_n}^2\nonumber \\ 
  &+\a  \n{x_n-y_{n-1}}^2 - 2\la_n \lr{F(z),y_n-z}.
\end{align}
\end{lemma}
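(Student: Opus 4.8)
The plan is to reduce \eqref{modif_ineq} to the single scalar inequality $t_n\le 0$ for the \emph{accepted} data $\la_n,y_n,x_{n+1}$ of the $n$-th iteration, and then to verify $t_n\le 0$ in every branch of Algorithm~\ref{algModif1}. First I would repeat the opening of the proof of Lemma~\ref{main_lemma}: since $x_{n+1}=P_C(x_n-\la_n F(y_n))$, Lemma~\ref{proj}(ii) gives the analogue of \eqref{fst}, and adding the nonnegative quantity $2\la_n\lr{F(y_n)-F(z),y_n-z}\ge0$ from monotonicity (C2) yields
$$\n{x_{n+1}-z}^2\le\n{x_n-z}^2-\n{x_{n+1}-x_n}^2+2\la_n\lr{F(y_n),y_n-x_{n+1}}-2\la_n\lr{F(z),y_n-z}.$$
Comparing this with \eqref{modif_ineq} and recalling the definition of $t_n$, one sees that \eqref{modif_ineq} is precisely this inequality combined with $t_n\le0$; so it suffices to establish $t_n\le0$ for the iterates the algorithm keeps.

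In the good case --- step~\ref{step_la_n} with $t_n\le0$ --- there is nothing to prove: the algorithm accepts $\la_n,y_n,x_{n+1}$ exactly because $t_n\le0$ was tested. The work is thus confined to the two corrected cases (\ref{step_la_n}.i) and (\ref{step_la_n}.ii), where I must show that the recomputed data again satisfy $t_n\le0$.

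For both corrected cases I would split the crucial term as
$$2\la_n\lr{F(y_n),y_n-x_{n+1}}=2\lr{\la_n F(y_n)-\mu F(y_{n-1}),y_n-x_{n+1}}+2\mu\lr{F(y_{n-1}),y_n-x_{n+1}},$$
taking $\mu=\la_{n-1}$ in case (\ref{step_la_n}.i) and $\mu=\t_n'\la_{n-1}$ in case (\ref{step_la_n}.ii). The first bracket is governed by the step conditions \eqref{la_n'}, resp.\ \eqref{la_nD}, which bound $\n{\la_nF(y_n)-\mu F(y_{n-1})}\le\a\n{y_n-y_{n-1}}$; Cauchy--Schwarz followed by exactly the Young and triangle estimates of \eqref{est2}, with $\a$ now playing the role of $\la L$, gives
$$2\lr{\la_n F(y_n)-\mu F(y_{n-1}),y_n-x_{n+1}}\le\a(1+\sqrt2)\n{x_n-y_n}^2+\a\n{x_n-y_{n-1}}^2+\sqrt2\,\a\n{x_{n+1}-y_n}^2.$$
The second bracket is treated as in \eqref{est1}: from $x_n=P_C(x_{n-1}-\la_{n-1}F(y_{n-1}))$ and Lemma~\ref{proj}(i) with the test points $x_{n+1},x_{n-1}\in C$ I combine the two resulting inequalities --- with weights $1,1$ in case (\ref{step_la_n}.i) and weights $1,\t_n'$ in case (\ref{step_la_n}.ii) --- to get $\lr{x_n-x_{n-1}+\la_{n-1}F(y_{n-1}),y_n-x_{n+1}}\le0$, whence $2\mu\lr{F(y_{n-1}),y_n-x_{n+1}}\le\n{x_{n+1}-x_n}^2-\n{x_n-y_n}^2-\n{x_{n+1}-y_n}^2$. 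Feeding both bounds into the definition of $t_n$, every squared norm cancels and $t_n\le0$ drops out.

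The delicate point, which I would check most carefully, is the bookkeeping of the stepsize factors in case (\ref{step_la_n}.ii): there $y_n$ is redefined as $y_n'=x_n+\t_n'(x_n-x_{n-1})$, so the projection inequality for $x_n$ must be combined with weights $1,\t_n'$ in order to reproduce $y_n'-x_{n+1}=(x_n-x_{n+1})+\t_n'(x_n-x_{n-1})$, and the mismatched factor $\mu=\t_n'\la_{n-1}$ must recombine with $\t_n'(x_n-x_{n-1})=y_n'-x_n$ so that the polarization identity $2\lr{y_n'-x_n,x_{n+1}-y_n'}=\n{x_{n+1}-x_n}^2-\n{y_n'-x_n}^2-\n{x_{n+1}-y_n'}^2$ furnishes exactly the three squared norms needed for the cancellation. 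Conditions \eqref{la_nF}--\eqref{la_nD} together with Lemmas~\ref{4i} and~\ref{4ii} guarantee that admissible $\t_n'$ and $\la_n'$ exist, so all the estimates above are legitimate.
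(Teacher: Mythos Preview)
Your argument is correct and follows the same skeleton as the paper's proof: reduce \eqref{modif_ineq} to the scalar condition $t_n\le 0$ for the accepted data, dispose of the ``tested'' branch trivially, and in the corrected branches~(\ref{step_la_n}.i) and~(\ref{step_la_n}.ii) exploit \eqref{la_n'} resp.\ \eqref{la_nD} together with the projection characterization of $x_n$ to bound the two pieces of $2\la_n\lr{F(y_n),y_n-x_{n+1}}$. The only organizational differences are that the paper first passes through an intermediate inequality with a factor $\la_n/\la_{n-1}$ and a five-way case split (its cases (a), (b), (e) being in effect a warm-up showing that $t_n\le 0$ often holds automatically), whereas you go straight to $t_n\le 0$; and in branch~(\ref{step_la_n}.ii) the paper introduces an auxiliary point $x_{n-1}'=x_n-\t_n'(x_n-x_{n-1})$ and invokes Lemma~\ref{sunny} to realise $x_n=P_C(x_{n-1}'-\t_n'\la_{n-1}F(y_{n-1}))$, while you obtain the same estimate by the weighted combination $1\cdot(\text{test point }x_{n+1})+\t_n'\cdot(\text{test point }x_{n-1})$ of the original projection inequalities. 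These are equivalent manoeuvres, and your version is somewhat tidier.
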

\begin{proof}
Proceeding analogously as in \eqref{fst} and in \eqref{snd}, we get
\begin{align}\label{modif_fst}
  \n{x_{n+1}-z}^2  \leq {} &  \n{x_n-z}^2  - \n{x_{n+1}-x_n}^2 +  2\la_n  \lr{F(y_n)-F(y_{n-1}),y_n - x_{n+1}} \nonumber\\ 
 &   + 2\la_n \lr{F(y_{n-1}),y_n-x_{n+1}}  -2\la_n \lr{F(z),y_n-z}.
\end{align}
The same arguments as in \eqref{est1} yield
\begin{align}\label{modif_est1}
  2\la_{n-1}\lr{F(y_{n-1}),y_n-x_{n+1}} & \leq 2 \lr{x_n-x_{n-1},x_{n+1}-y_n} = 2 \lr{y_n-x_n,x_{n+1}-y_n} \nonumber \\ 
& =\n{x_{n+1}-x_n}^2 - \n{x_n-y_n}^2 - \n{x_{n+1}-y_n}^2.
\end{align} 
Using  \eqref{est2} and the inequality $\la_n \n{F(y_n) - F(y_{n-1})}\leq \a \n{y_n-y_{n-1}}$, 
we obtain 
\begin{multline}\label{modif_est2}
2\la_n \lr{F(y_n)-F(y_{n-1}), y_n-x_{n+1}} \leq 2\a\n{y_n-y_{n-1}}\n{x_{n+1}-y_n} \\ 
\leq \a (1+\sqrt 2)\n{y_n-x_n}^2 + \a  \n{x_n-y_{n-1}}^2 + \sqrt{2} \a \n{x_{n+1}-y_n}^2.
\end{multline}
Applying \eqref{modif_est1} and \eqref{modif_est2} in \eqref{modif_fst}, we  conclude that
\begin{align}\label{whatnext?}
  \n{x_{n+1}-z}^2 \leq {}& \n{x_n-z}^2 - \n{x_{n+1}-x_n}^2\nonumber \\
   & + \a\bigl((1+\sqrt 2)\n{x_n-y_n}^2  + \n{x_n-y_{n-1}}^2 +  \sqrt{2} \n{x_{n+1}-y_n}^2 \bigr)\nonumber  \\
 & + \frac{\la_{n}}{\la_{n-1}} \bigl(\n{x_{n+1}-x_n}^2 - \n{x_n-y_n}^2 - \n{x_{n+1} - y_n}^2\bigr) \nonumber \\ 
 & - 2\la_n \lr{F(z),y_n-z}.
\end{align}
Now we have a lot of cases
\begin{remunerate}
\item[(a)] $\la_n = \la_{n-1}$; 
\item[(b)] $\la_n<\la_{n-1}$ and $\n{x_{n+1}-x_n}^2 >
 \n{x_n-y_n}^2 - \n{x_{n+1} - y_n}^2$;
\item[(c)] $\la_n>\la_{n-1}$ and $\n{x_{n+1}-x_n}^2\geq
 \n{x_n-y_n}^2 - \n{x_{n+1} - y_n}^2$;
\item[(d)] $\la_n<\la_{n-1}$ and $\n{x_{n+1}-x_n}^2\leq
 \n{x_n-y_n}^2 - \n{x_{n+1} - y_n}^2$;
\item[(e)] $\la_n>\la_{n-1}$ and $\n{x_{n+1}-x_n}^2 <
 \n{x_n-y_n}^2 - \n{x_{n+1} - y_n}^2$.
\end{remunerate}
Let us consider all them.

Case (a). It is quite obvious that when $\la_n =\la_{n-1}$ we obtain inequality~\eqref{modif_ineq}.

Cases (b) and (e). In both cases we notice that
$$ \bigl(1-   \frac{\la_n}{\la_{n-1}}\bigr) \n{x_{n+1}-x_n}^2 > \bigl(1-  \frac{\la_n}{\la_{n-1}}\bigr)\bigl (\n{x_n-y_n}^2+\n{x_{n+1}-y_n}^2\bigr).$$
Thus, from \eqref{whatnext?} we conclude that
\begin{align*}
  \n{x_{n+1}-z}^2 \leq {} & \n{x_n-z}^2 -\bigl(1- \frac{\la_n}{\la_{n-1}}\bigr) \n{x_{n+1}-x_n}^2 \\
   & + \a\bigl((1+\sqrt 2)\n{x_n-y_n}^2  + \n{x_n-y_{n-1}}^2 +  \sqrt{2} \n{x_{n+1}-y_n}^2 \bigr)  \\
& -\frac{\la_{n}}{\la_{n-1}} \bigl( \n{x_n-y_n}^2 + \n{x_{n+1} - y_n}^2\bigr) -  2\la_n \lr{F(z),y_n-z} \\
 < {} & \n{x_{n}-z}^2 - \n{x_{n}-y_{n}}^2 - \n{x_{n+1}-y_n}^2 - 2\la_n \lr{F(z),y_n-z}\\
& + \a\bigl((1+\sqrt 2)\n{x_n-y_n}^2  + \n{x_n-y_{n-1}}^2 +  \sqrt{2} \n{x_{n+1}-y_n}^2 \bigr)\\
 = {}& \n{x_n-z}^2 - (1-\a (1+\sqrt 2))\n{x_n-y_n}^2   - (1-\sqrt 2\a) \n{x_{n+1}-y_n}^2 \\ 
& + \a  \n{x_n-y_{n-1}}^2 - 2\la_n \lr{F(z),y_n-z},
\end{align*}
which is exactly inequality \eqref{modif_ineq}.

Observe that inequality~\eqref{whatnext?} with condition  $t_n\leq 0$ is equivalent to~\eqref{modif_ineq}. So, from now on we assume that $t_n>0$.

Case (c) and $t_n>0$. It is clear that we can rewrite \eqref{modif_fst} (with $\la_n'$ instead of $\la_n$) as
\begin{align*}\label{modif_snd}
  \n{x_{n+1}-z}^2  \leq {}&  \n{x_n-z}^2  - \n{x_{n+1}-x_n}^2 +  2  \lr{\la_n' F(y_n)- \la_{n-1} F(y_{n-1}),y_n - x_{n+1}} \\ 
&   + 2\la_{n-1} \lr{F(y_{n-1}),y_n-x_{n+1}}  -2\la_n' \lr{F(z),y_n-z} \\
\leq {} & \n{x_n-z}^2  - \n{x_{n+1}-x_n}^2 +  2 \a \n{y_n-y_{n-1}}\n{y_n-x_{n+1}}  \\  &   + 2\la_{n-1} \lr{F(y_{n-1}),y_n-x_{n+1}}  -2\la_n' \lr{F(z),y_n-z}.
\end{align*}
Now using \eqref{modif_est1} and the second inequality in \eqref{modif_est2},  we get easily~\eqref{modif_ineq}.

Case (d) and $t_n>0$. 
Let $\la_{n-1}' = \t_n' \la_{n-1}$ and $x_{n-1}'  = x_n - \t_n' (x_n-x_{n-1})$.  Since $\la_{n-1}'\leq \la_{n-1}$, the point $x_{n-1}'$ lies on the segment $[x_{n-1},x_n]$ and the point $x_{n-1}' - \la_{n-1}'F(y_{n-1})$ lies on the segment $[x_n, x_{n-1}-\la_{n-1}
F(y_{n-1})]$. Thus by Lemma~\ref{sunny}
\begin{equation*}
   x_{n}  =  P_C (x_{n-1}' - \la_{n-1}' F(y_{n-1})).
\end{equation*}
Also we have that  $x_{n+1} = P_C (x_{n} - \la_n' F(y_n'))$  and $y_n' = 2x_n - x_{n-1}'$. Applying the same arguments as above for this new $x_{n+1}$, we do not get into the Case (d), since $\la_n'\geq \la_{n-1}'$, and do not get into the Case (c), since 
$$\n{\la_n' F(y_n') - \la_{n-1}' F(y_{n-1})}\leq \a \n{y_n'-y_{n-1}}.$$
This means that we get
\begin{align*}
  \n{x_{n+1}-z}^2 \leq {}&  \n{x_n-z}^2 - (1-\a (1+\sqrt 2))\n{x_n-y_n'}^2   - (1-\sqrt 2\a) \n{x_{n+1}-y_n'}^2\\
 & +\a  \n{x_n-y_{n-1}}^2 - 2\la_n' \lr{F(z),y_n'-z}.
\end{align*}
Hence, for redefined values we obtain the desired inequality~\eqref{modif_ineq} and the proof is complete.
\end{proof}
 
Before proceeding, we want to give some intuition to Algorithm~\ref{algModif1}. First, note that $\bar \la$ is the upper bound of
$(\la_n)$. It guarantees that our  stepsize does not go to  infinity. The item $\frac{1+\t_{n-1}}{\t}\la_{n-1}$ in \eqref{la_def} provides us necessary condition to obtain  the good estimation of $2\la_n\lr{F(z),y_n-z}$. This seems not very important, since in many cases $\lr{F(z),y_n-z}\geq 0$ and we can just not take into account this item in~\eqref{modif_ineq}.   But this condition also  ensures that  $\la_n$ is not too large in comparison with  $\la_{n-1}$. This may be useful if we get into the Case (c) or (d) in Lemma~\ref{main_modif_lemma}, since we will be able  to perform \eqref{la_n'} or \eqref{la_nD} faster. 
Also note that  usually we have $\la_n \leq 2\la_{n-1}$, since $\t_n\neq 1$ only in the Case (d).

The inequality $t_n\leq 0$  ensures at once  that on the $n$-th iteration  inequality~\eqref{modif_ineq} holds. Therefore, we do not need to immerse into a branch of the cases in Algorithm~\ref{algModif1} or Lemma~\ref{main_modif_lemma}. Also note that a former and a latter use different classifications of possible cases. In Lemma~\ref{main_modif_lemma} the inequality $t_n\leq 0$ holds certainly for cases (a), (b), and (e). But it happens very often that even in such bad cases as (c) or (d) $t_n\leq 0$ also. Thus, by checking whether  $t_n\leq 0$, we can avoid to get into the undesirable steps~(\ref{step_la_n}.i) or~(\ref{step_la_n}.ii). 

If $t_n>0$ and $\la_n\geq \la_{n-1}$, then we get into step~ (\ref{step_la_n}.i). Actually, this case is not so bad. We have two possibilities here. The first one is to decrease $\la_n$ until it satisfies \eqref{la_n'}. And the second one is just to set $\la_{n}'=\la_{n-1}$. The first one seems to be better, since we suppose, as usual, that a larger stepsize provides a smaller number of iterations (however, it needs some extra work to check condition \eqref{la_n'}).

It only remains the most tedious case when $t_n>0$ and $\la_n< \la_{n-1}$. In this case we show that
it is always possible to change $y_n=2x_n-x_{n-1}$ to some other $y_n$, defined by $y_n =
(1+\t_n)x_n - \t_n x_{n-1}$ with some $\t_n\in (0,1)$, which  we choose to ensure that~\eqref{la_nF}
and~\eqref{la_nD} hold. Next, with new $y_n$ and $\la_n$ we  compute $x_{n+1}$ as in
step~\ref{modifstep}. The conditions~\eqref{la_nF} and~\eqref{la_nD} guarantee that for new $t_n$ we
have $t_n\leq 0$. However, when we go to the next iteration, we must not forget that $y_n$ and
$\la_n$ have been changed.

Now we prove the convergence of Algorithm~\ref{algModif1} in a quite similar way as in \S~\ref{algorithm}.
\begin{theorem} 
Assume that (C1)--(C3) hold. Then  any sequences $(x_n)$ and $(y_n)$ generated by Algorithm \ref{algModif1} weakly converge to a solution  of \eqref{vip}.
\end{theorem}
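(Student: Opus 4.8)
The plan is to mirror the proof of Theorem~\ref{th_1} almost verbatim, using the new key inequality~\eqref{modif_ineq} from Lemma~\ref{main_modif_lemma} in place of~\eqref{ineq_lemma}. The crucial structural fact I would establish first is that the stepsizes $(\la_n)$ are bounded away from zero and from above, namely $\a/L \le \la_n \le \bar\la$ for all $n$. The lower bound was already shown inside the proof of Lemma~\ref{4ii} (the ratio $\a\n{u-v}/\n{F(u)-F(v)} \ge \a/L$ by (C3)), and the upper bound is immediate from the $\bar\la$ term in the definition of $\la(y,\t)$. This two-sided control on $(\la_n)$ is what lets the rest of the argument go through, since the term $-2\la_n\lr{F(z),y_n-z}$ now carries a variable coefficient.

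First I would fix $z\in S$ and, exactly as in Theorem~\ref{th_1}, rewrite~\eqref{modif_ineq} using $\lr{F(z),y_n-z}=2\lr{F(z),x_n-z}-\lr{F(z),x_{n-1}-z}$ together with $\lr{F(z),x_n-z}\ge 0$ (which holds because $z$ solves~\eqref{vip} and $x_n\in C$). The point $y_n-z$ splits into a telescoping pair, so I would define
\begin{align*}
a_n & = \n{x_n-z}^2 + \a\n{x_n-y_{n-1}}^2 + 2\la_{n-1}\lr{F(z),x_{n-1}-z},\\
b_n & = (1-\a(1+\sqrt 2))\n{x_n-x_{n-1}}^2,
\end{align*}
and use $y_n-x_n=x_n-x_{n-1}$ to identify $\n{x_n-y_n}$ with $\n{x_n-x_{n-1}}$. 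The coefficient juggling must account for $\la_n\ne\la_{n-1}$, but the lower bound $\la_n\ge\a/L$ keeps the dropped cross terms nonnegative, and the condition $\a<\sqrt2-1$ guarantees $1-\a(1+\sqrt2)>0$ and $1-\sqrt2\,\a>0$. The aim is to reach $a_{n+1}\le a_n-b_n$, so Lemma~\ref{lim_seq} then yields boundedness of $(x_n)$ and $\lim_n\n{x_n-x_{n-1}}=0$, hence $\lim_n\n{x_{n+1}-y_n}=0$ by the triangle inequality as before.

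Next I would extract a weakly convergent subsequence $x_{n_i}\rr x^*$, note $y_{n_i}\rr x^*$ as well, and pass to the limit in the projection characterization $\lr{x_{n_i+1}-x_{n_i}+\la_{n_i}F(y_{n_i}),y-x_{n_i+1}}\ge 0$ to obtain, via monotonicity (C2), that $\lr{F(y),y-x^*}\ge 0$ for all $y\in C$; Lemma~\ref{minty} then gives $x^*\in S$. Here the boundedness $\la_{n_i}\le\bar\la$ is what keeps the $\la_{n_i}F(y_{n_i})$ terms harmless in the limit. Finally I would invoke the Opial-type uniqueness argument verbatim: since $(a_n)$ converges and $(\n{x_n-y_{n-1}}^2)$ converges, so does $\n{x_n-z}^2+2\la_{n-1}\lr{F(z),x_{n-1}-z}$, and two distinct weak cluster points in $S$ would contradict Lemma~\ref{opial}.

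The main obstacle I anticipate is not the convergence machinery, which transfers mechanically, but the variable-coefficient bookkeeping: because the quantity $2\la_{n-1}\lr{F(z),x_{n-1}-z}$ appears inside $a_n$ with the lagged stepsize $\la_{n-1}$, I must verify that the telescoping still closes despite $\la_n\ne\la_{n-1}$, and that the last convergent combination used in the Opial step genuinely converges even though its linear term has a moving coefficient. Controlling this requires the two-sided bound on $(\la_n)$ together with the already-established convergence $\lim_n\la_{n-1}\lr{F(z),x_{n-1}-z}$ being well-defined along cluster subsequences; I would handle it by isolating the scalar sequence $(2\la_{n-1}\lr{F(z),x_{n-1}-z})$ and arguing its convergence separately before combining.
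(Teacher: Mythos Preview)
Your proposal has a genuine gap in the telescoping step, and it stems from two related oversights.

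First, you assume throughout that $y_n=2x_n-x_{n-1}$, so that $\lr{F(z),y_n-z}=2\lr{F(z),x_n-z}-\lr{F(z),x_{n-1}-z}$ and $\n{x_n-y_n}=\n{x_n-x_{n-1}}$. This is not always true for Algorithm~\ref{algModif1}: whenever step~(\ref{step_la_n}.ii) is triggered, the point is redefined as $y_n=(1+\t_n)x_n-\t_n x_{n-1}$ with some $\t_n\in(0,1)$. The correct decomposition is therefore
\[
\la_n\lr{F(z),y_n-z}=\la_n(1+\t_n)\lr{F(z),x_n-z}-\la_n\t_n\lr{F(z),x_{n-1}-z}.
\]

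Second, and more seriously, your choice of $a_n$ with the term $2\la_{n-1}\lr{F(z),x_{n-1}-z}$ does not telescope. Carrying out your computation (even granting $\t_n=1$), the right-hand side contains $2\la_n\lr{F(z),x_{n-1}-z}$, and replacing $\la_n$ by $\la_{n-1}$ would require $\la_n\le\la_{n-1}$, which can fail. The lower bound $\la_n\ge\a/L$ does nothing here. What actually closes the telescoping is the algorithmic constraint built into~\eqref{la_def}, namely $\la_n\le\frac{1+\t_{n-1}}{\t_n}\la_{n-1}$, i.e.\ $\t_n\la_n\le(1+\t_{n-1})\la_{n-1}$. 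Combined with $\lr{F(z),x_{n-1}-z}\ge0$ this yields
\[
\la_n\lr{F(z),y_n-z}\ge \la_n(1+\t_n)\lr{F(z),x_n-z}-\la_{n-1}(1+\t_{n-1})\lr{F(z),x_{n-1}-z},
\]
so the correct Lyapunov quantity is
\[
a_{n+1}=\n{x_{n+1}-z}^2+\a\n{x_{n+1}-y_n}^2+2\la_n(1+\t_n)\lr{F(z),x_n-z},
\]
with $b_n=(1-\a(1+\sqrt2))\bigl(\n{x_n-y_n}^2+\n{x_{n+1}-y_n}^2\bigr)$. With this in place the remainder of your outline (boundedness, weak cluster points lying in $S$, and the Opial argument) goes through as in Theorem~\ref{th_1}; the only extra point, which you correctly identified, is that one needs the two-sided bound $\a/L\le\la_n\le\bar\la$ when passing to the limit in the analogue of~\eqref{weak}.
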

 \begin{proof}
 Let us show that the sequence $(x_n)$ is bounded. Fix any $z\in S$.
 Since $\t_n \la_n\leq (1+\t_{n-1})\la_{n-1}$, we have that
\begin{align*}
\la_n\lr{F(z),y_n-z} & = \la_n\lr{F(z),(1+\t_n)x_n - \t_{n} x_{n-1} - z} \\ 
& = \la_n(1+\t_n)\lr{F(z),x_n - z} - \la_n\t_n \lr{F(z),x_{n-1}-z} \\ 
& \geq \la_n(1+\t_n)\lr{F(z),x_n - z} - \la_{n-1}(1+\t_{n-1})\lr{F(z),x_{n-1} - z}.
\end{align*}
And hence we can deduce from inequality \eqref{modif_ineq} that 
\begin{align}\label{modif_re1}
    \n{x_{n+1}-z}^2  +{}& \a \n{x_{n+1}-y_n}^2 + 2\la_n(1+\t_n)\lr{F(z),x_n - z} \nonumber \\
 \leq{}&  \n{x_n-z}^2 + \a  \n{x_n-y_{n-1}}^2  + 2\la_{n-1}(1+\t_{n-1})\lr{F(z),x_{n-1} - z}\nonumber  \\
 & - (1-\a (1+\sqrt 2))(\n{x_n-y_n}^2 + \n{x_{n+1}-y_n}^2).
 \end{align}

For $n\geq 1$  set
\begin{align*}
a_{n+1} & = \n{x_{n+1}-z}^2  + \a \n{x_{n+1}-y_n}^2 + 2\la_n(1+\t_n)\lr{F(z),x_n - z},\\
b_n & = (1-\a (1+\sqrt 2))(\n{x_n-y_n}^2 + \n{x_{n+1}-y_n}^2).
\end{align*}
With this notation we can rewrite inequality~\eqref{modif_re1} as
$a_{n+1}\leq a_n - b_n$. Thus by Lemma \ref{lim_seq} we can conclude that $(a_n)$, and hence $(x_n)$, are bounded and $\lim_{n\r\infty}b_n = 0$.
From the latter it follows immediately that
\begin{equation*}
\lim_{n\r\infty}\n{x_{n+1}-y_n}=0\quad \text{and}\quad \lim_{n\r\infty}\n{x_{n+1}-x_n}=0.
\end{equation*}

From this point we need to proceed in much the same way as in Theorem~\ref{th_1}. Thus, to avoid the repetition, we restrict our attention to only the place where arguments differ. This is  inequality~\eqref{weak} where we want to go to the limit as $i\r\infty$. Recall that $\bar \la$  and $\frac \a L$ are the upper and the lower bounds of the sequence $(\la_n)$, respectively (the former follows form the definition of $\la_n$ and the latter from Lemma~\ref{4ii}). Consequently, we can conclude that the sequence $(\la_n)$ has a cluster point, and moreover, all its cluster points are not equal to zero. This means  that when we go to the limit in \eqref{weak} we obtain the same result as in Theorem~\ref{th_1}. This is the desired conclusion.
\end{proof}

Algorithm~\ref{algModif1} allows several modifications. In particular, instead of checking whether $t_n\leq 0$, one can require at once that $\la_n$, which we choose in step~\ref{modifstep},  satisfies the following inequality
 $$\n{\la_n F(y_n) - \la_{n-1}F(y_{n-1})}\leq \a \n{y_n-y_{n-1}}.$$
The arguments, used in  steps~(\ref{step_la_n}.i) and~(\ref{step_la_n}.ii), show that is always possible to do (perhaps, with changed $y_n$). Although this algorithm  have only one projection per iteration, our computational experience shows that it is less effective than Algorithm~\ref{algModif1}. This is explained again by the fact that very often $t_n\leq 0$ even for not suitable $\la_n$.

\section{Numerical illustration}\label{num_results}
In this section we examine the benchmark of Alg.~\ref{alg} and~\ref{algModif1}. For this, first, we
present the performance of stationary algorithms on some affine problems. And then we compare
nonstationary Alg.~\ref{algModif1} with two well-known methods. For abbreviation    we use the following.

Stationary methods:
\begin{itemize}
    \item  EGM for algorithm described by \eqref{korpel};

    \item SubEGM for algorithm described by \eqref{censor};

    \item TBFM for algorithm described by \eqref{tseng};
    \item SubPM for algorithm described by \eqref{mal_sem}.
\end{itemize}

Nonstationary methods:
\begin{itemize}
    \item Alg. Solod.-Svait. for the  algorithm of Solodov and Svaiter \cite{solodov:1999};

    \item TBFM-nst for the nonstationary backward-forward algorithm of Tseng \cite{tseng00}.

\end{itemize}

Note that in \cite{solodov:1999} there were considered two algorithms, so we chose the second one with the better performance.

   For a benchmark of all algorithms we included the number of iterations (\texttt{iter.}) and the time of execution (\texttt{time}). For the nonstationary algorithms we also added the total number (\texttt{np}) of projections and the total number (\texttt{nf}) of all values $F$ that were evaluated.

We describe the test details below. Computations were performed using Wolfram Mathematica 8.0 on an Intel Core i3-2348M \@2.3GHz running 64-bit Linux Mint 13.  The time is measured in seconds using the intrinsic Mathematica function \texttt{Timing}. The projection onto the feasible set $C$ was performed using the intrinsic Mathematica function \texttt{ArgMin} (for Problems 1, 4, and 5 the projection is explicit). The termination criteria are the following
\begin{align}
  &\text{EGM, SubEGM, TBFM, Solod-Svait., TBFM-nst.}  &&\n{x_n-y_n} \leq \e ,\label{1_cr}\\
  &\text{SubPM} &&r(x_{n+1},y_n) \leq \e, \label{2_cr}\\
  &\text{Alg.~\ref{alg}, \ref{algModif1}} &&r(x_n,y_n) \leq \e. \label{3_cr}
\end{align}
We do not use the simple criterion $\n{x-P_C(x-\la F(x))}\leq \varepsilon$ for Alg.~\ref{alg},
\ref{algModif1}, and SubPM because we do not want to compute $P_C(x-\la F(x))$ extra.  The better way is to verify that
\begin{align*}
  \n{y_n- P_C(y_n-\la F(y_n))}  &\leq \n{x_{n+1}-y_n}+\n{x_{n+1}-P_C(y_n-\la F(y_n))}\\
  & \leq \n{x_{n+1}-y_n}+\n{x_n-y_n} = r(x_n,y_n).
\end{align*}
The last inequality holds, since $P_C$ is nonexpansive. The similar argument works for
SubPM. Moreover, we notice that termination criteria \eqref{2_cr}, \eqref{3_cr} are
stronger than \eqref{1_cr}. This means that the solutions generated by Alg.~\ref{alg},
\ref{algModif1}, and SubPM are not worse (or even better) than the solutions generated by other algorithms if we measure the quality of the solution by the projected error bound.
\begin{table}\centering
  \footnotesize
  \begin{tabular}{||c||c|c||c|c||c|c||c|c||c|c|c||} 
    \hline\hline
    $ m$ & \multicolumn{2}{c||}{EGM} &
    \multicolumn{2}{c||}{SubEGM } &
    \multicolumn{2}{c||}{SubPM} &
    \multicolumn{2}{c||}{Alg. \ref{alg}} \\ \hline
    &  iter. & time &  iter. & time &  iter. & time &  iter. &   time  \\ \hline
    500   & 129 & 1.3 & 129  & 1.3 & 109  & 0.6 & 92  & 0.5\\ \hline
    1000  & 133 & 5.6 & 133  & 5.7 & 120  & 2.6 & 95  & 2\\ \hline
    2000  & 138 & 22  & 138  & 22  & 121  & 10  & 98  & 8\\ \hline
    4000  & 143 & 98  & 143  & 101 & 122  & 43  & 101 & 33 \\   \hline\hline
  \end{tabular}\caption{Results for EGM, SubEGM, SubPM, and Alg~\ref{alg} for Problem~1. $L=1$, $\la = 0.4$, $\e = 10^{-3}$}\label{table1}
\end{table}

\begin{remunerate}
\item[\textsf{Problem 1.}] \label{pr1} Our first example is classical. The feasible set is $C = \R^{m}$ and $F(x) = A x$, where $A$ is a square matrix $m\times m$ defined by condition
$$
a_{ij}=
\begin{cases}
  -1, &\quad  j=m+1-i>i \\
  1, & \quad j=m+1-i<i \\
  0, & \quad \text{otherwise.}
\end{cases}
$$
This corresponds to a matrix whose secondary diagonal consists half of $-1$’s and half of $1$’s, the rest of the elements being zero. For even $m$ the zero vector is the problem solution of \eqref{vip}. This is a classical example of a problem where usual gradient method does not converge. The results of the numerical experiments are compiled in Table~\ref{table1}.  We did not include the results of TBFM, since for this problem it is identical with EGM. For all tests we took $x^0 = (1,\dots, 1)$, $\e = 10^{-3}$ and $\la = 0.4$.

\item[\textsf{Problem 2.}]\label{pr2} We take $F(x) = Mx + q$ with the matrix $M$ randomly generated as suggested in \cite{hphard}:
$$M = AA^T + B + D,$$ where every entry of the $m \times m$ matrix $A$ and of the $m\times m$ skew-symmetric matrix $B$ is uniformly generated from $(-5, 5)$, and every diagonal entry of the $m\times m$ diagonal $D$ is uniformly generated from $(0, 0.3)$ (so $M$ is positive definite), with every entry of $q$ uniformly generated from $(-500, 0)$. The feasible set is 
$$C = \{x\in \R^m_+ \mid x_1 + x_2 +\dots + x_m = m\},$$ and the starting point is $x_0 = (1,\dots, 1)$.  For this problem we took $L = \n{M}$. For every $m$, as shown in Table~\ref{table2}, we have generated three random samples with different choice of $M$ and $q$. The error is $\e = 10^{-3}$.
\begin{table}\centering
  \footnotesize
  \begin{tabular}{||c||c|c||c|c||c|c||c|c||c|c||c|c||c||}
    \hline\hline
    $m$ & \multicolumn{2}{c||}{EGM} &
    \multicolumn{2}{c||}{SubEGM } &
    \multicolumn{2}{c||}{TBFM} &  \multicolumn{2}{c||}{SubPM} &
    \multicolumn{2}{c||}{Alg. \ref{alg}} \\ \hline
    &  iter. & time &  iter. & time & iter. & time &  iter. & time &  iter. & time  \\ \hline
    10 & 82  & 7.7  & 82  & 3.9 & 82  & 3.9 & 83  & 4.0 & 83  &4.0  \\ \hline
    10 & 146 & 15.2 & 148 & 7.8 & 148 & 7.8 & 152 & 8.0 & 147 & 7.8 \\ \hline
    10 & 63  & 6.3  & 63  & 3.2 & 63  & 3.2 & 63  & 3.2 & 63  & 3.2 \\ \hline
    20 & 78  & 10.2 & 79  & 5.2 & 79  & 5.3 & 79  & 5.3 & 79  & 5.3 \\ \hline
    20 & 101 & 14.5 & 102 & 7.3 & 102 & 7.4 & 101 & 7.3 & 102 & 7.4 \\ \hline
    20 & 110 & 17.6 & 110 & 8.9 & 110 & 8.9 & 110 & 8.9 & 111 & 9.0 \\ \hline
    50 & 224 & 243  & 226 & 126 & 226 & 128 & 231 & 134 & 225 & 134 \\ \hline
    50 & 236 & 277  & 237 & 144 & 238 & 143 & 241 & 151 & 237 & 144 \\ \hline
    50 & 444 & 507  & 447 & 265 & 447 & 266 & 455 & 263 & 445 & 267\\ 
    \hline\hline
  \end{tabular}\caption{Results for EGM, SubEGM, TBFM, SubPM, and  Alg~\ref{alg}
    for Problem~2. $L=\n{M}$,  \qquad \qquad  $\la =  0.4/L$, $\e = 10^{-3}$}\label{table2}
\end{table}

As one can see from Table~\ref{table2}, EGM has the worst performance.  This is explained by the
presence of two projections per iteration, which played the key role, since for small $m$ the
computation of $F$ is cheap. All others algorithms behave similarly. Also it is very likely that
with an increasing of $m$ they will continue to have approximately the same number of
iterations. However, for large $m$ the computation of $F$ will also matter, so for those cases we
can expect that SubPM and Alg.~\ref{alg} will have substantially better performance, compared with the others. \\ \par 

From now on, we present out test results for nonstationary methods.  
The choice of test problems for nonstationary algorithms is explained by a simple structure of the feasible set $C$. This is important for the better performance of TBFM-nst, since one must compute many projections per iteration, and Alg.~Solod.-Svait., since on every iteration one must compute a projection onto the intersection of $C$ with some hyperspace. (Of course, instead of a projection onto this intersection, we can project onto the set $C$. But in that case the stepsize will be smaller).

In the implementation of Alg.~Solod-Svait. we chose $\sigma = 0.3$, $\c=0.5$, $\theta = 4$; in the
implementation of TBFM-nst. we chose $\b=0.5$, $\theta = 0.9$ where the notation is taken from
\cite{solodov:1999} and \cite{tseng00} respectively. For Alg.~\ref{algModif1} we used $\a = 0.4$ and
$\la_{-1} = 0.01$.
\item[\textsf{Problem 3.}] Kojima-Shindo Nonlinear Complementarity Problem (NCP) was considered in~\cite{pang:93} and in~\cite{kanzow:94}, where $m=4$ and the mapping $F$ is defined by
$$F(x_1,x_2,x_3,x_4) =\begin{bmatrix}
  3x_1^2 + 2x_1x_2 + 2x_2^2 + x_3 +
  3x_4 -6 \\
  2x_1^2 + x_1+x_2^2+10x_3+2x_4-2\\
  3x_1^2 + x_1x_2 + 2x_2^2+2x_3 + 9x_4
  -9\\
  x_1^2+3x_2^2+2x_3+3x_4-3
\end{bmatrix}.
$$ 
The feasible set is $C = \{x\in \R^4_+ \mid x_1 + x_2 + x_3 + x_4= 4\}$.  First, we chose some particular starting points: $x^0 = (1,1,1,1)$ and $x^0=(0.5,0.5,2,1)$. And then we used different random points from $C$. For all starting points we had two tests: with $\e=10^{-3}$ and $\e= 10^{-6}$. The results are summarized in Table~\ref{table3}. Here we aborted the evaluation of algorithm after $50$ seconds.

As one can see,  the ratio $nf/iter.$ for Alg. Solod.-Svait. is much higher than for Alg.~4.2. For
this reason even with larger  number of iterations Alg.~4.2 sometimes outperforms Alg. Solod.-Svait. Note that TBFM-nst has the worst performance, since on every iteration it must compute many nontrivial projections onto the simplex (this is exactly what we meant at the beginning of \S\ref{modif_algorithm}). The solution of this problem is not unique, but most of the times all algorithms converge to the $x^* = (1.225,0, 0, 2.775)$. But for example for $x^0=(1,1,1,1)$ algorithms converge to the different solutions, this is the reason of such distinction in their performance. Also for $\e = 10^{-6}$ Alg.~Solod.-Svait. often converges very slowly, that is why it does not execute in $50$ seconds. In that time, Alg.~\ref{algModif1} and TBFM-nst have not this drawback, since their stepsizes $\la_n $ are separated from zero.

\begin{table}\centering
  \footnotesize
  \begin{tabular}{||c|c|c|c|c|c|c|c||}
    \hline\hline
    $x^0$ & $\e$ & \multicolumn{2}{c|}{Alg. \ref{algModif1}} &
    \multicolumn{2}{c|}{Alg. Solod.-Svait.} & \multicolumn{2}{c||}{ TBFM-nst} \\ \hline
    & & iter. (np/nf)  & time &  iter. (np/nf) & time & iter. (np/nf) & time \\ \hline 
    $(1,1,1,1)$ & $10^{-3}$ & 36 (36/36)   & 1.0 & 6 (11/11)  & 0.4  & 62 (66/128) & 2.1 \\ \hline 
    & $10^{-6}$ & 72 (82/86)  & 2.2 & 6 (11/11) & 0.4  &156 (160/316) & 4.9 \\ \hline 

    $(0.5,0.5, 2,1)$ & $10^{-3}$ & 41 (41/41) &  1.1 & 18 (35/58) & 1.4 &69 (73/142) & 2.2 \\ \hline
    & $10^{-6}$ & 75 (87/86) & 2.4  & --- & $>$ 50 & 163 (167/330) & 5.0 \\ \hline
    
    random & $10^{-3}$ & 57 (57/58) & 1.6 & 13 (25/49) & 1.2 &72 (77/149) & 2.3\\ \hline
    & $10^{-6}$ & 63 (67/65) & 1.9 & --- & $>$ 50 &79 (84/163) & 2.6 \\ \hline

    random &$10^{-3}$ & 40 (40/40) & 1.0 & 20 (39/73) & 1.5  & 66 (70/136) &2.0\\ \hline
    &$10^{-6}$ & 71 (82/74)& 2.0 & --- &$>$ 50 & 160 (164/324)  & 4.9 \\ \hline

    random &$10^{-3}$ & 59 (59/59) & 1.5 & 15 (29/40) & 1.1 & 87 (91/178)& 2.6 \\ \hline
    &$10^{-6}$ & 93 (99/105)  & 2.5 & --- & $>$ 50 & 181 (185/366) &5.7 \\ \hline\hline
  \end{tabular}\caption{Results for Alg.~\ref{algModif1}, Alg.~Solod.-Svait., and TBFM-nst for Problem 3}  \label{table3}
\end{table}

\item[\textsf{Problem 4.}] This example was considered by Sun in~\cite{sun:94}, where
  \begin{align*}
    F(x)   & = F_1(x) + F_2(x),\\
    F_1(x) & = (f_1(x),f_2(x),\dots,   f_m(x)),\\
    F_2(x) & = Dx+c, \\
    f_i(x) & = x_{i-1}^2 + x_i^2 +  x_{i-1}x_i + x_i x_{i+1},\quad   i=1,2,\dots, m\\
    x_0 & = x_{m+1} = 0,
  \end{align*}
  Here $D$ is a square matrix $m\times m$ defined by condition
$$d_{ij}=
\begin{cases}
  4, & i = j,\\
  1, & i-j=1,\\
  -2,& i-j = -1,\\
  0, & \text{otherwise},
\end{cases}
$$
and $c=(-1,-1,\dots, -1)$.  The feasible set is $C = \R^m_+$ and the starting point is $x^0 = (0,0,\dots,0)$. The test results are listed in Table~\ref{table4} for different $m$ and $\e$.
\begin{table}\centering
  \footnotesize
  \begin{tabular}{||c|c|c|c|c|c|c|c||}
    \hline\hline
    $x^0$ & $\e$ & \multicolumn{2}{c|}{Alg. \ref{algModif1}} &
    \multicolumn{2}{c|}{Alg. Solod.-Svait.} & \multicolumn{2}{c||}{ TBFM-nst} \\ \hline
    & & iter. (np/nf)  & time &  iter. (np/nf) & time & iter. (np/nf) & time \\ \hline 
    5 & $10^{-3}$ & 20 (20/20) & 0.008 & 18 (35/70) & 0.012 & 22 (24/46) & 0.007 \\ \hline
    & $10^{-6}$ & 43 (43/43) & 0.014 &  34 (67/134) & 0.018 &47 (49/96) & 0.019 \\  \hline

    50 & $10^{-3}$ &23 (24/26) & 0.02 & 20 (39/78) & 0.06 & 28 (30/58) & 0.04 \\ \hline
    & $10^{-6}$ &46 (47/49) & 0.04 & 34 (67/134) & 0.1 & 53 (55/108) & 0.07 \\\hline 

    500 &$10^{-3}$ & 27 (28/30) & 0.6 & 22 (43/86) & 1.6 & 32 (34/66) & 1.0 \\ \hline
    &$10^{-6}$ & 50 (51/53) & 1.4 & 36 (71/142) & 2.7 & 58 (60/118) & 1.8 \\ \hline

    1000 &$10^{-3}$ & 28 (29/31) & 1.4 & 22 (43/86) & 4.8 & 33 (35/68) & 2.8 \\ \hline
    &$10^{-6}$ & 51 (52/54) & 2.6 & 36 (71/142)  &7.4 & 59 (61/120) & 5.1 \\ 
    \hline\hline
  \end{tabular}\caption{Results for Alg.~\ref{algModif1} and Alg.~Solod.-Svait., and TBFM-nst 
   for Problem 4}  \label{table4}
\end{table}

\item[\textsf{Problem 5.}] This problem was used by Kanzow in~\cite{kanzow:94} with $m=5$. The mapping $F$ is defined by
  \begin{align*}
    F(x) & = (f_1(x),\dots,  f_5(x)),\\
    f_i(x) & = 2(x_i-i+2)\exp\Bigl\{\sum_{i=1}^5 (x_i-i+2)^2\Bigr\},\quad i=1,\dots, 5н
  \end{align*}
  and the feasible set is $C = \R^5$. The solution for this problem is $x^* = (-1,0,1,2,3)$.  The results of the numerical experiments for different starting points and $\e$ are compiled in Table~\ref{table5}.
\end{remunerate}

We want to note that for this problem only the number of iterations matter, since both projection and mapping $F$ are cheap. For this reason, the time is quite small and hence, can not be a good illustration of the algorithm's performance, since  Mathematica function  \texttt{Timing} measures time only with some error of order $10^{-3}$.
\begin{table}\centering
  \footnotesize
  \begin{tabular}{||c|c|c|c|c|c|c|c||}
    \hline\hline
    $x^0$ & $\e$ & \multicolumn{2}{c|}{Alg. \ref{algModif1}} &
    \multicolumn{2}{c|}{Alg. Solod.-Svait.} & \multicolumn{2}{c||}{TBFM-nst} \\ \hline
    & & iter. (np/nf)  & time &  iter. (np/nf) & time & iter. (np/nf) & time \\ \hline 
    $(1,1,1,1,1)$ & $10^{-3}$ & 26 (26/26)   & 0.009 & 14 (27/53)  & 0.009  & 28 (29/57) & 0.007 \\ \hline 
    & $10^{-6}$ & 49 (49/49)  & 0.017 &  24 (47/93) & 0.015& 52 (53/105) & 0.014 \\ \hline 

    $(0,0,0,0,0)$ &$10^{-3}$ & 15 (18/35) & 0.009 & 32 (63/82) & 0.015  & 43 (44/87)& 0.012 \\ \hline
    &$10^{-6}$ & 34 (37/54) & 0.016 & 42 (83/122) & 0.025 & 67 (68/135) & 0.017 \\ \hline

    random & $10^{-3}$ &25 (28/41) & 0.012 & 79 (157/175) & 0.029 & 90 (91/181) & 0.024  \\ \hline
    & $10^{-6}$ &59 (59/59) & 0.02 & 89 (179/215) & 0.050 & 114 (115/229) &0.036 \\\hline 

    random &$10^{-3}$ & 20 (24/44)  &0.015 & 23 (45/67) & 0.011 & 36 (37/73) &0.017\\ \hline
    &$10^{-6}$ & 43 (47/67) & 0.022 & 33 (65/107) & 0.017 &60 (61/121) &0.016\\ 
    \hline\hline
  \end{tabular}\caption{Results for Alg.~\ref{algModif1} and Alg.~Solod.-Svait., and TBFM-nst
    for Problem 5}  \label{table5}
\end{table}

To summarize our numerical experiments, we want to make some observations. First, the disadvantage
of Alg.~\ref{alg} in comparison with other stationary algorithms is a smaller interval for possible
stepsize $\la$. For simple problems it really can matter, however, for huge-scale problems
evaluation of $F$ is much more important. To illustrate this, let us consider Problem 1 again. The
best performance of EGM and SubEGM achieves approximately for $\la = 0.7$ (SubPM and Alg.~\ref{alg}
do not converge for such $\la$). Nevertheless, the time execution of EGM and SubEGM for $m=2000$ is
$12$ seconds for both, and for $m=4000$ is $52$ and $53$ seconds, respectively. Hence, for Problem~1
with large $m$ EGM and SubEGM, even with their best stepsize, are slower.

Second, our numerical experiments confirm that ``bad'' cases~(\ref{step_la_n}.i) and~(\ref{step_la_n}.ii) in Alg~\ref{algModif1} are really very rare. Notice that the number of evaluation of $F$ is approximately the same as the number of iteration, that is a drastic contrast with Alg.~Solod.Svait and TBFM-nst. Also if even we do not take into account the good performance of Alg.~\ref{algModif1}, it seems to be more robust than Alg.~Solod.-Svait., since it does not require any additional assumptions on the feasible set and its stepsizes are separated from zero. On the other hand, the proof of Alg.~Solod-Svait., in contrast to Alg.~\ref{algModif1},  does not  require the Lipschitz-continuity of the mapping $F$.

\section*{Acknowledgments}
The author wishes to express his gratitude to Mikhail Solodov and to the two anonymous referees
for their constructive suggestions that led to improvements in the paper.
\bibliographystyle{siam}

\bibliography{publications}

\begin{thebibliography}{10}

\bibitem{antipin}
{\sc A.~S. Antipin}, {\em On a method for convex programs using a symmetrical
  modification of the {L}agrange function}, Eko\-no\-mi\-ka i Matematicheskie
  Metody, 12 (1976), pp.~1164--1173.

\bibitem{baiocchi:84}
{\sc C.~Baiocchi and A.~Capelo}, {\em Variational and Quasivariational
  Inequalities. Applications to Free Boundary Problems}, Wiley, New York, 1984.

\bibitem{baucomb}
{\sc H.~H. Bauschke and P.~L. Combettes}, {\em Convex Analysis and Monotone
  Operator Theory in Hilbert Spaces}, Springer, New York, 2011.

\bibitem{reich:2011}
{\sc Y.~Censor, A.~Gibali, and S.~Reich}, {\em The subgradient extragradient
  method for solving variational inequalities in {H}ilbert space}, Journal of
  Optitmization Theory and Applications, 148 (2011), pp.~318--335.

\bibitem{pang:03}
{\sc F.~Facchinei and J.-S. Pang}, {\em Finite-Dimensional Variational
  Inequalities and Complementarity Problems, Volume I and Volume II},
  Springer-Verlag, New York, USA, 2003.

\bibitem{hphard}
{\sc P.~T. Harker and J.-S. Pang}, {\em A damped-newton method for the linear
  complementarity problem}, Lectures in Applied Mathematics, 26 (1990),
  pp.~265--284.

\bibitem{iusem:97}
{\sc A.~N. Iusem and B.~F. Svaiter}, {\em A variant of {K}orpelevich's method
  for variational inequalities with a new search strategy}, Optimization, 42
  (1997), pp.~309--321.

\bibitem{kanzow:94}
{\sc C.~Kanzow}, {\em Some equation-based methods for the nonlinear
  complemantarity problem}, Optimization Methods and Software, 3 (1994),
  pp.~327--340.

\bibitem{khobotov}
{\sc E.~N. Khobotov}, {\em Modification of the extragradient method for solving
  variational inequalities and certain optimization problems}, USSR
  Computational Mathematics and Mathematical Physics, 27 (1989), pp.~120--127.

\bibitem{konnov07}
{\sc I.~V. Konnov}, {\em Equilibrium models and variational inequalities},
  Elsevier, Amsterdam, 2007.

\bibitem{korpel:76}
{\sc G.~M. Korpelevich}, {\em The extragradient method for finding saddle
  points and other problems}, Eko\-no\-mi\-ka i Matematicheskie Metody, 12
  (1976), pp.~747--756.

\bibitem{lyashko11}
{\sc S.~I. Lyashko, V.~V. Semenov, and T.~A. Voitova}, {\em Low-cost
  modification of {K}orpelevich's method for monotone equilibrium problems},
  Cybernetics and Systems Analysis, 47 (2011), pp.~631--639.

\bibitem{mal-sem}
{\sc Yu.~V. Malitsky and V.~V. Semenov}, {\em An extragradient algorithm for
  monotone variational inequalities}, Cybernetics and Systems Analysis, 50
  (2014), pp.~271--277.

\bibitem{marcotte:91}
{\sc P.~Marcotte}, {\em Application of {K}hobotov's algorithm to varational
  inequalities and network equilibrium problems}, Information Systems and
  Operational Research, 29 (1991), pp.~258--270.

\bibitem{pang:93}
{\sc J.-S. Pang and S.~A. Gabriel}, {\em {NE}/{SQP}: A robust algorithm for the
  nonlinear complementarity problem}, Mathematical Programming, 60 (1993),
  pp.~295--337.

\bibitem{popov:80}
{\sc L.~D. Popov}, {\em A modification of the {A}rrow-{H}urwicz method for
  finding saddle points}, Mathematical Notes, 28 (1980), pp.~845--848.

\bibitem{solodov:1999}
{\sc M.~V. Solodov and B.~F. Svaiter}, {\em A new projection method for
  variational inequality problems}, SIAM Journal on Control and Optimization,
  37 (1999), pp.~765--776.

\bibitem{sun:94}
{\sc D.~Sun}, {\em A projection and contraction method for the nonlinear
  complementarity problems and its extensions}, Mathematica Numerica Sinica, 16
  (1994), pp.~183--194.

\bibitem{tseng:95}
{\sc P.~Tseng}, {\em On linear convergence of iterative methods for the
  variational inequality problem}, Journal of Computational and Applied
  Mathematics, 60 (1995), pp.~237--252.

\bibitem{tseng00}
\leavevmode\vrule height 2pt depth -1.6pt width 23pt, {\em A modified
  forward-backward splitting method for maximal monotone mappings}, SIAM
  Journal on Control and Optimization, 38 (2000), pp.~431--446.

\bibitem{zeidler3}
{\sc E.~Zeidler}, {\em Nonlinear Functional Analysis and its Applications III:
  Variational Methods and Optimization}, Springer-Verlag, New York, 1985.

\bibitem{zeidler2}
\leavevmode\vrule height 2pt depth -1.6pt width 23pt, {\em Nonlinear Functional
  Analysis and its Applications II: Linear Monotone Operators, Nonlinear
  Monotone Operators}, Springer-Verlag, New York, 1990.

\end{thebibliography}

\end{document}